\definecolor{shadecolor}{gray}{0.875}
\renewcommand{\P}{\mathbb{P}}
\renewcommand{\O}{\mathcal{O}}
\newcommand{\C}{\mathbb{C}}
\newcommand{\K}{\mathbb{K}}
\newcommand{\ra}{\rightarrow}
\def\Kp1{{{\rm K}}_{p,1}(\mathbb P^n,b;d)}
\def\Bpq{{{\mathbb K}}_{p,q}(b;d)}
\def\Bp1{{{\mathbb K}}_{p,1}(b;d)}
\def\BK{{{\mathbb K}}}
\def\BS{{{\mathbb S}}}
\def\Sym{{\rm{Sym}}}
\numberwithin{equation}{section} 
\newtheorem*{thm1}{Theorem}
\newtheorem*{prop1}{Proposition}
\newtheorem{thm}{Theorem}[section]
\newtheorem{prop}[thm]{Proposition}
\newtheorem{lem}[thm]{Lemma}
\theoremstyle{definition}
\newtheorem{cor}[thm]{Corollary}
\newtheorem{rmk}[thm]{Remark}
\newtheorem{ex}[thm]{Example}
\title{Asymptotic Schur decomposition of Veronese syzygy functors}
\author{Mihai Fulger, Xin Zhou}
\date{}
\begin{document}

\maketitle
\tableofcontents

\section*{Introduction}
The syzygies of the $d$-th Veronese embedding of $\mathbb P(V)$ are functors of the complex vector space $V$. We show that as $d$ grows, their Schur functor decomposition is very rich whenever they are not zero. This is deduced from an asymptotic study of related plethysms. We also obtain results related to the question \cite[Prob. 7.8]{EL}.
\par Turning to details, denote by $K_{p,q}(V;d)$ the vector space generated by the minimal generators in degree $(p+q)$ of the $p$-th module of syzygies of the section ring of $\O_{\P(V)}(d)$ as a $\Sym^{\bullet}H^0(\mathcal O_{\mathbb P(V)}(d))-$module. Equivalently, $K_{p,q}(V;d)$ is the cohomology of the Koszul-type complex 
\begin{equation*}
\bigwedge\nolimits^{p+1}\Sym^dV\otimes\Sym^{(q-1)d}V\to\bigwedge\nolimits^{p}\Sym^dV\otimes\Sym^{qd}V\to
\bigwedge\nolimits^{p-1}\Sym^dV\otimes\Sym^{(q+1)d}V.
\end{equation*}
A first subject of interest (cf. \cite{BCR}, \cite{BCR2}, \cite{EL}, \cite{G}, \cite{OP}, \cite{OR}, \cite{R}, \cite{Sch}) is understanding when these cohomology groups are zero or not. Nonvanishing results on $\mathbb P^2$ by Ottaviani and Paoletii \cite{OP} have been extended as part of their work in \cite{EL} by Ein and Lazarsfeld, and independently by J. Weyman. They show that for a fixed large $d$, essentially all the syzygies not covered by classical vanishing results (obtained from Castelnuovo--Mumford regularity considerations) are nonzero. The second author has found a simpler proof in \cite{Z}. 
\vskip.5cm

In this paper, we are interested in how the decompositions of syzygy modules behave asymptotically.\footnote{A non-equivariant study of growth can be found in \cite{EEL} where Ein, Erman, and Lazarsfeld study a "random" Betti table and provide evidence for their conjecture that Betti numbers become normally distributed as $d$ grows.} Specifically, the association $$V\to K_{p,q}(V;d)$$ is a functor that we denote $\BK_{p,q}(d)$. As in \cite{EL}, \cite{R}, \cite{Snow}, we can then write 
\begin{equation}\label{decomposition}
\BK_{p,q}(d)=\bigoplus_{\lambda\vdash(p+q)d}M_{\lambda}(p,q;d)\otimes_{\C}\BS_{\lambda},
\tag{$\dagger$}
\end{equation} 
where $M_{\lambda}(p,q;d)$ is a complex vector space whose dimension measures the multiplicity in $\BK_{p,q}(d)$ of the Schur functor $\mathbb S_{\lambda}$ corresponding to the partition $\lambda$ of $(p+q)d$. 

We aim to give asymptotic measures for the complexity of \eqref{decomposition}. Dictating our considerations are the following classical results:
\begin{enumerate}[a)]
\item A result of Green from \cite{G} implies that
$$\BK_{p,q}(d)=0,\mbox{ if }q\geq 2,\mbox{ and } d \geq p.$$
\item It is elementary that $$\BK_{p,0}(d)=0,\mbox{ if }p\geq 1,\ \BK_{0,1}(d)=0,\mbox{ and }\BK_{0,0}(d)=\mathbb C.$$
\end{enumerate}
Hence if $p$ and $q$ are fixed, and $d$ grows, then $\BK_{p,q}(d)$ can be nontrivial only when $p\geq1$ and $q=1$. Our main result (see also Theorem \ref{kp10d}) gives a measure for how in this case its Schur-functor decomposition is not just nontrivial, but very rich.

\begin{thm1}Fix $p\geq1$. Then as $d$ grows, $\BK_{p,1}(d)$ contains\footnote{We use $\Theta(f)$ to denote functions bounded below and above by multiples of $f$. Some more precise statements appear in the body of the paper. Compare ii) with Theorem \ref{kp10d}.ii).}
\begin{enumerate}[i)]
\item $\Theta(d^p)$ distinct Schur functors.
\item $\Theta(d^{p+1 \choose 2})$ Schur functors counting multiplicities.
\end{enumerate}
\end{thm1}

\vskip.5cm
\par For $q=0$, interesting asymptotic behavior is obtained when we introduce a new parameter. We define $K_{p,q}(V,b;d)$ to be the vector space generated by the minimal generators in degree $(p+q)$ of the $p$-th module of syzygies of the section ring of $\O_{\P(V)}(d)$ twisted by  $\O_{\P(V)}(b)$ as a $\Sym^{\bullet}H^0(\mathcal O_{\mathbb P(V)}(d))-$module. In terms of Koszul cohomology, it is computed by
$$\bigwedge\nolimits^{p+1}\Sym^dV\otimes\Sym^{(q-1)d+b}V\to\bigwedge\nolimits^{p}\Sym^dV\otimes\Sym^{qd+b}V\to
\bigwedge\nolimits^{p-1}\Sym^dV\otimes\Sym^{(q+1)d+b}V.$$ Historically, the introduction of the parameter $b$ was the key to creating a convenient formalism for inductive proofs, e.g., restriction to hypersurfaces. We define $\BK_{p,q}(b;d)$ as before. A version of Green's result shows that for fixed $p$, only $q=0$ and $q=1$ can provide nonzero functors when $d$ is large enough. In similar vein to the previous theorem, we prove a statement for $q = 0$:

\begin{thm1}Fix $p\geq 1$ and $b\geq1$. As $d$ grows, $\BK_{p,0}(b;d)$ contains:
\begin{enumerate}[i)]
\item $\Theta(d^{p-1})$ distinct Schur functors.
\item $\Theta(d^{p\choose2})$ Schur functors counting multiplicities.
\end{enumerate}
\end{thm1}

We also give a statement concerning the case $q = 1$ in the same setting:
\begin{prop1}
Fix $p\geq1$ and $b\geq1$. Assume that $p\geq b+1$. Then as $d$ grows, the logarithms of the number of distinct types of Schur subfunctors and of the number of Schur subfunctors counting multiplicities in $\BK_{p,1}(b;d)$ are $\Theta(\log d)$.
\end{prop1}

\par Varying other parameters is also of interest. In a multivariate setting, Raicu \cite{Ra} shows that as $b$ grows, the decompositions of the syzygies of Segre--Veronese embeddings stabilize. 
Fixing $d$ and allowing $p$ to grow, we prove:

\begin{prop1}Fix $b\geq1$ and $d\geq3$. Then as $p$ grows,
the logarithm of the number of distinct types of Schur subfunctors of $\BK_{p,0}(b;d)$ is $\Theta(\sqrt{p})$.
\end{prop1}

We also give a lower bound for the number of distinct Schur subfunctors in $\BK_{p,1}(b;d)$ increasing $d$ with $p$. (See Proposition \ref{increase d with p}.)

\vskip.5cm
\par
The idea of the proof of Theorem \ref{kp10d} is to show that the Schur decomposition of $\BK_{p,1}(d)$ is as rich as that of $\bigotimes^{p+1}\Sym^d$, in which each of the terms of the Koszul complex defining $\BK_{p,1}(d)$ embeds. We prove that the sum of all multiplicities from the left term in the Koszul complex is asymptotically $1/(p+1)!$ of the total multiplicity of $\bigotimes^{p+1}\Sym^d$, whereas for the middle term, the corresponding factor is $1/p!$, and the right term is asymptotically insignificant in comparison. These explain the assertion on the total multiplicity. The growth of the number of distinct types of functors in $\BK_{p,1}(d)$ is deduced by studying a moment map. 
\par The asymptotics of the Schur decomposition of $\bigotimes^{p+1}\Sym^d$ are deduced via Pieri's rule from a convex-geometric approach, flavors of which already exist in the literature (e.g., \cite{BZ}, \cite{K}, \cite{KK}, \cite{Kh}, \cite{O}). Then a result of Howe \cite{Ho} can be applied to study the asymptotic decompositions of each of the terms in the defining Koszul-type complex of $\BK_{p,1}(d)$.
\vskip.5cm
\par We carry these ideas out in the first two sections. In the first section we study the asymptotics in $d$ of the Schur decomposition of $\bigotimes^p\Sym^d$, then of $\mathbb S_{\mu}\Sym^d$, where $\mu$ is a fixed partition of $p$. In the second section we deduce our results on $\BK_{p,1}(d)$ and on $\BK_{p,0}(b;d)$ when $b>0$. In the third section we explain why new ideas are needed for the study of $\BK_{p,1}(b;d)$ when $b>0$. We then sketch a restriction argument inspired by \cite{EL}, which under additional assumptions leads to a weaker asymptotic description of the number of distinct types of Schur functors in the decomposition of $\BK_{p,1}(b;d)$ as $d$ grows. In the fourth section we study asymptotic behaviors of syzygies when $p$ grows.

\begin{paragraph}{Acknowledgments} We have benefited from useful discussions with Samuel Altschul, Igor Dolgachev, Daniel Erman, William Fulton, Roger Howe, Thomas Lam, and Mircea Musta\c t\u a. We thank Claudiu Raicu and David Speyer for providing many comments, suggestions, and improvements on a preliminary draft. We are grateful to our advisor Robert Lazarsfeld for posing the problem, and for numerous suggestions and encouragements. \end{paragraph} 

\addtocontents{toc}{\protect\setcounter{tocdepth}{0}}
\section{Notation and facts}\label{Facts from Representation Theory}
\addtocontents{toc}{\protect\setcounter{tocdepth}{2}}

\begin{paragraph}{1.}
We adopt the notation and definitions of \cite{FH} and \cite{F} for the basic objects of the Representation Theory of the general linear group. In particular, we write $\lambda\vdash n$ when $\lambda=(\lambda_1\geq\lambda_2\geq\ldots)$, with $\sum_{i\geq1}\lambda_i=n$, is a \textit{partition} of $n$. The \textit{length} of $\lambda$, denoted $|\lambda|$, is the number of its nonzero parts. We write $(2^2, 1)$ for the partition $(2,2, 1)$, we write $\lambda+\mu$ for the partition $(\lambda_1+\mu_1,\ \lambda_2+\mu_2,\ldots)$, and $2\lambda$ for the partition $(2\lambda_1,\ 2\lambda_2,\ldots)$, etc.  
\end{paragraph}

\begin{paragraph}{2.}
We write $S^d$ for $\Sym^d$. The symmetric group on $p$ elements is denoted by $\Sigma_p$.
\end{paragraph}

\begin{paragraph}{3.} In this paper we work with functors $\mathcal F:{\rm Vect}_{\mathbb C}\to{\rm Vect}_{\mathbb C}$ of finite dimensional complex vector spaces that have unique finite direct sum \textit{decompositions}\footnote{Plethysm functors are polynomial, and syzygy functors have decompositions by \cite[\S 2]{R}} $$\mathcal F=\bigoplus_{\lambda}M_{\mathcal F,\lambda}\otimes_{\mathbb C}\BS_{\lambda},$$ where $M_{\mathcal F,\lambda}$ are complex vector spaces, and $\BS_{\lambda}$ is the Schur functor corresponding to the partition $\lambda$. The dimensions
$$(\mathcal F,\lambda)=_{\rm def}\dim M_{\mathcal F,\lambda}$$ are the \textit{multiplicities} of $\BS_{\lambda}$ in $\mathcal F$. The \textit{total multiplicity} of $\mathcal F$ is $$N(\mathcal F)=_{\rm def}\sum_{\lambda}(\mathcal F,\lambda).$$ The \textit{complexity} of $\mathcal F$ is the number of distinct types of Schur functors appearing in its decomposition, i.e., $$c(\mathcal F)=_{\rm def}\#\{\lambda:\ (\mathcal F,\lambda)\neq 0\}.$$ If $V$ is a complex vector space of finite dimension, then $\mathcal F(V)$ is naturally a $GL(V)$-representation. We will often use this to reduce questions about functors to questions about representations of the general linear group.
\end{paragraph}

\begin{paragraph}{4.} It is an elementary consequence of Pieri's rule that $(\bigotimes^pS^d,\lambda)$ is equal to the number of semistandard Young tableaux of shape $\lambda$ and weight $\mu=(d^p)$.\footnote{Such tableaux can be described by Gelfand--Tsetlin patterns (see \cite[p133]{S} for a definition). The number of such Young tableaux is called the Kostka number $K_{\lambda\mu}$} 
In particular, the Schur functor $\BS_\lambda$ appears in $\bigotimes^p S^d$ if, and only if, $\lambda$ is a partition of $pd$ with at most $p$ parts. 
\begin{OmittedProofs}
\begin{shaded}
We claim that if $\lambda$ is a partition of $pd$ then $S_\lambda$ appears in $\bigotimes^p S^d$ if and only if $\lambda$ has at most $p$ parts. We see this as follows. By 4 above, $S_\lambda$ appears in $\bigotimes^p S^d$ if and only if $K_{\lambda \mu} \neq 0$. For general partitions $\lambda,\mu$, $K_{\lambda \mu} \neq 0$ if and only if they are partitions of the same integer and $\lambda$ is large than $\mu$ in dominance order. So any partition appearing in $\bigotimes^pS^d$ has to be a partition of $pd$ and it can have at most $p$ parts because of Pieri's rule. Conversely, for $1 \leq k \leq p$, the sum of the $k$ largest parts of $(d,\dots, d)$ is $kd$. If for $\lambda$, $\lambda_1+ \dots + \lambda_k < kd$, then $\lambda_{k+1} + \dots + \lambda_p > (p-k)d$, then $\lambda_k < d <  \lambda_{k+1}$, contradiction to the fact that $\lambda$ is a partition. So $\lambda_1+ \dots + \lambda_k < kd$ is impossible and therefore, by definition, $\lambda$ is larger than $\mu$ in dominance order.  
\end{shaded}
\end{OmittedProofs}
\end{paragraph}

\begin{paragraph}{5.} Let $V$ be a $GL_n$-representation. Denoting by $U_n$ the unipotent group, by \cite[p144]{F}, the total multiplicity of $V$ is equal to $\dim V^{U_n}$ (the space of $U_n$-invariant vectors in $V$). It is classical that $\mathbb S_{\lambda}(\C^p)\neq 0$ if and only if, $|\lambda|\leq p$. If $\mathcal F:{\rm Vect}_{\C}\to{\rm Vect}_{\C}$ is a decomposable functor (as in Fact 3), such that any Schur subfunctor corresponds to a partition with at most $p$ rows, then the Schur decomposition of $\mathcal F$ can be read from the decomposition into irreducible $GL_p$-subrepresentations of $\mathcal F(\C^p)$. In particular, we can read the total multiplicity and the complexity of $\mathcal F$ by applying the functor to $\C^p$, i.e., $N(\mathcal F)=\dim \mathcal F(\C^p)^{U_p}.$
\end{paragraph}

\section{Asymptotic plethysms}

In this section we investigate the growth with $d$ of the total multiplicity, and of the complexity of $\bigotimes^p\Sym^d$, $\Sym^p\Sym^d$, and $\bigwedge^p\Sym^d$ when $p$ is fixed. As in several of the references listed in the remark below, the idea behind our study is convex-geometric. The contents of this section may be known to the experts, but we were unable to find precise references. We give a presentation here for the convenience of the reader. 

\begin{rmk}\label{plethysm refs}\textnormal{
From the literature on the asymptotics of the decomposition of $\bigotimes^pS^d$ and other plethysms, we mention the following:
\begin{itemize}
\item The work of Kaveh and Khovanskii in \cite{KK} applies to the behavior of $\bigotimes^pS^d$ with fixed $p$ and varying $d$. Their focus is on subtle properties such as Fujita-type approximation, the Brunn--Minkowski inequality, the Brion--Kazarnowskii formula, etc. 
\item In \cite{K}, Kaveh computes the dimension of the moment body, varying $p$, for $\bigotimes^pV$, i.e., the growth with $p$ of the number of its distinct subrepresentations, where $V$ is a fixed $GL_n$-representation. 
\item Tate and Zelditch (\cite{TZ}) studied the asympototics of Kostka numbers of $\bigotimes^pS_\lambda$ where $p$ varies and $S_\lambda$ is a fixed representation. 
\item Different asymptotic plethysms have been studied in \cite{Wei}. In \cite{Man}, we again find the idea of the convex-geometric approach. 
\end{itemize}
}
\end{rmk}

\subsection{Integral points and $\bigotimes^pS^d$}
In this subsection, we show that the total multiplicity and the complexity of $\bigotimes^pS^d$ are counted by the number of lattice points inside slices over $d$ of two rational convex cones. The growth with $d$ of the number of such integral points is a polynomial of degree equal to the dimension of the cross section of the corresponding cone. We determine these two cones and compute the corresponding dimensions.\footnote{In the language of \cite{KK},  we are determining the dimension of the moment body and of the multiplicity body (which in our case is also the classical Gelfand--Tsetlin polytope) for the $p$-th product of a sufficiently large dimension projective space.} These are captured by the following theorem:
\begin{thm}\label{tensorsymthm}Fix $p\geq 1$. Then 
\begin{enumerate}[i)]
\item $\lim_{d\to\infty}c(\bigotimes^pS^d)/d^{p-1}$ is a finite positive number.
\item $\lim_{d\to\infty}N(\bigotimes^pS^d)/d^{p\choose 2}$ is a finite positive number.
\end{enumerate}
\end{thm}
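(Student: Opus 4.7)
I would realize both $c(\bigotimes^p S^d)$ and $N(\bigotimes^p S^d)$ as counts of lattice points in $d$-dilates of rational polytopes, and then extract the polynomial growth orders from Ehrhart theory. In both cases the relevant polytope is a slice of a classical polyhedral cone whose structure is pinned down directly by Fact 4 of Section \ref{Facts from Representation Theory}.

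\textbf{Part (i).} Fact 4 identifies the Schur functors appearing in $\bigotimes^p S^d$ with partitions $\lambda \vdash pd$ satisfying $\ell(\lambda) \le p$, so $c(\bigotimes^p S^d)$ equals the number of lattice points in the rational simplex
$$\Delta_d = \{x \in \mathbb{R}^p : x_1 \ge \cdots \ge x_p \ge 0,\ x_1+\cdots+x_p = pd\}.$$
Since $\Delta_d = d\,\Delta_1$ and the simplex $\Delta_1$ has dimension $p-1$ with nonempty relative interior (for example it contains the strictly decreasing tuple $x_i = 2(p-i+1)/(p+1)$), Ehrhart theory for rational polytopes gives $c(\bigotimes^p S^d) = \vol_{p-1}(\Delta_1)\cdot d^{p-1} + O(d^{p-2})$, yielding the desired positive finite limit.

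\textbf{Part (ii).} Fact 4 further gives $(\bigotimes^p S^d,\lambda) = K_{\lambda,(d^p)}$, the Kostka number, so $N(\bigotimes^p S^d)$ counts all SSYT of weight $(d^p)$. Via the Gelfand--Tsetlin bijection this equals the number of lattice points in the polytope $P_d \subset \mathbb{R}^{\binom{p+1}{2}}$ cut out on the pattern entries $(a_{k,j})_{1 \le j \le k \le p}$ by the interlacing inequalities $a_{k,j} \ge a_{k-1,j} \ge a_{k,j+1}$ and the row-sum equations $\sum_{j=1}^k a_{k,j} = kd$ for $k = 1, \ldots, p$. The inequalities are homogeneous and the equalities scale linearly in $d$, so $P_d = d\,P_1$. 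The $p$ row-sum equations are supported on disjoint sets of variables (one row each), hence linearly independent, giving $\dim P_1 \le \binom{p+1}{2} - p = \binom{p}{2}$. Exhibiting a strictly-interior pattern with the required row sums (constructed by downward induction on rows, starting from a top row interior to $\Delta_1$ and choosing each lower row in the open interlacing box so as to hit the prescribed sum) shows equality, and Ehrhart theory yields $N(\bigotimes^p S^d) = \vol_{\binom{p}{2}}(P_1)\cdot d^{\binom{p}{2}} + O(d^{\binom{p}{2}-1})$.

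\textbf{Main obstacle.} The only step requiring real verification is the full-dimensionality of $P_1$, equivalently the existence of a strictly-interior continuous GT pattern realizing the prescribed row sums. The inductive interior-point construction is routine but one must check at each step that the open interlacing interval for the newly inserted row is wide enough to admit an entry vector summing to the required value; this reduces to a sequence of elementary inequalities on the row sums. Everything else---Ehrhart quasi-polynomiality of rational polytopes, positivity of the top coefficient once full-dimensionality is established, and the reductions via Fact 4 and the Gelfand--Tsetlin correspondence---is standard.
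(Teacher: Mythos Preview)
Your proposal is correct and follows essentially the same route as the paper. Both arguments realize $c(\bigotimes^pS^d)$ and $N(\bigotimes^pS^d)$ as lattice-point counts in $d$-dilates of rational polytopes of dimensions $p-1$ and $\binom{p}{2}$ respectively, then read off the growth order; the paper parameterizes tableaux by the matrix $t_{ij}=$(number of $j$'s in row $i$) and gives the explicit interior point $t_{ij}=p^i\epsilon$, while you use the linearly equivalent Gelfand--Tsetlin coordinates and an inductive interior-point construction, but this is a cosmetic difference.
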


\begin{proof} We specify two graded sets $Y_\bullet$ and $\mathcal{Y}_\bullet$ such that the cardinalities of their $d$-th graded pieces count the complexity and the total multiplicity of $\bigotimes^pS^{\bullet}$, respectively. 

\begin{paragraph}{Complexity.}The set
\begin{equation*}
 Y_d =_{\rm def}\left\{(\lambda_2,...\lambda_p,d)\ :\  \lambda_1 \geq \lambda_2 \geq ... \geq \lambda_p \geq 0,\mbox{ with }  \lambda_1:= pd-\sum_{k=2}^p\lambda_k \right\}\subseteq \mathbb{N}^{p-1} \times \{d\}
\end{equation*}
is a parametrization of the partitions $\lambda\vdash pd$ with at most $p$ parts, which is the set of distinct types of Schur functors appearing in the decomposition of $\bigotimes^pS^d$ by Fact 4 from \S 0. Therefore the complexity of $\bigotimes^p S^d$ is the number of integral points with the last coordinate $d$ inside the cone contained in $\mathbb R_{\geq0}^{p-1}\times\mathbb R_{\geq0}$ over the cross section:
$$Y =_{\rm def}\left\{(\lambda_2,...\lambda_p,1)\ :\  \lambda_1 \geq \lambda_2 \geq ... \geq \lambda_p \geq 0,\mbox{ with } \lambda_1 := p-\sum_{k=2}^p \lambda_k \right\}\subset \mathbb{R}_{\geq 0}^{p-1} \times \{1\}.$$
This is $(p-1)$-dimensional, which proves part $i)$.
\end{paragraph}

\begin{paragraph}{Total multiplicity.}Guided by Fact 4, we seek to parameterize the set of semistandard Young tableaux $T$ with $pd$ boxes, at most $p$ rows, and weight $(d^p)$. A first parameterization is the $p\times p$ matrix 
\begin{center}$t_{ij} =_{\rm def}$ number of $j$'s in the $i$-th row of $T$.
\end{center}

\noindent The semistandard tableau condition implies $t_{ij}=0$ when $j<i$. The restriction on the weight of $T$ imposes $$t_{jj} = d- \Sigma_{k=1}^{j-1}t_{kj}\mbox{ for all }j\in\{1,\ldots,p\}.$$ Hence $T$ is determined by $(t_{ij})_{i<j}$ and by $d$. With this parameterization, the set of all such $T$ is in bijection with the set $\mathcal Y_d$ of of points in $\mathbb N^{p\choose 2}\times\{d\}=\{((t_{ij})_{i<j},d)\}$ subject to the following conditions:

\begin{itemize}
\item Denoting $t_{ii}:=d-\sum_{j=1}^{i-1}t_{ji},$ we ask that \begin{equation}t_{ii}\geq 0,\end{equation} i.e., the number of $j$'s on the $i$-th row should be nonnegative even when $j=i$. 
\item We also ask that 
\begin{equation}\sum_{k=i}^{j-1}t_{ik} \geq \sum_{k=i+1}^j t_{i+1,k}, \ \mbox{for all }1 \leq i < p,\ 1 \leq j \leq p.\end{equation}
This is the tableau condition: we ask that labels greater than or equal to $j$ may start to appear on the $(i+1)$-st row of $T$ only on columns to the left or below where labels smaller than or equal to $j-1$ stopped on the $i$-th row. (Note that applying this with $j=p$ implies that the shape of $T$ is a Young diagram.)
\end{itemize}

\noindent Similar to the complexity problem, $\mathcal Y_d$ is the set of integral points inside the cone with the last coordinate $d$ in $\mathbb R_{\geq0}^{p\choose2}\times\mathbb R_{\geq0}$ over the cross section:
$$\mathcal Y:=\left\{((t_{ij})_{i<j},1)\ :\ \begin{array}{c}
0\leq t_{ii}:=1-\sum_{k=1}^{i-1}t_{ki},\mbox{ for all }1\leq i\leq p\\
\sum_{k=i}^{j-1}t_{ik} \geq \sum_{k=i+1}^j t_{i+1,k}, \ \mbox{for all }1 \leq i < p,\ 1 \leq j \leq p\end{array}\right\}\subset\mathbb R_{\geq0}^{p\choose 2}\times\{1\}.$$
This convex body is $p \choose 2$-dimensional,\footnote{To see this, it is enough to produce a point of $\mathcal Y$ that satisfies strictly all the defining inequalities. Setting $t_{ij}=p^i \epsilon$ for all $i<j$, and for sufficiently small $\epsilon$, defines such a point.} and part $ii)$ follows.
\end{paragraph}
\end{proof}

\begin{rmk} The limits in Theorem \ref{tensorsymthm} are at least algorithmically computable for each $p$, since they are the volumes of the convex bodies $Y$ and $\mathcal Y$ respectively.\end{rmk}

\begin{ex}[The plethysm of $\bigotimes^2 S^d$]\textnormal{
The decomposition of $\bigotimes^2 S^d$ consists of all Schur functors of type $(2d-a,a)$ with $0 \leq a \leq d$, each with multiplicity $1$. The total multiplicity and the complexity are both $d$ in this case. \qed
}
\end{ex}

\begin{rmk}\textnormal{
Our constructions are similar to those of the Gelfand--Tsetlin patterns (see also \cite{BZ}). We use this particular form in order to compute the wanted growth rates. } \qed
\end{rmk}

The following remark is a consequence of the proof of Theorem \ref{tensorsymthm}. It allows us to deem certain collections of Schur functors asymptotically insignificant and will be used repeatedly:
\begin{rmk}\label{few}The coordinates corresponding to Young tableaux of weight $(d^p)$ with at most $p-1$ rows, and to Young diagrams with $pd$ boxes and at most $p-1$ rows lie in proper linear subspaces of the real vector spaces spanned by $\mathcal Y_{\bullet}$ and $Y_{\bullet}$ respectively. It follows that asymptotically all the Young tableaux and diagrams parameterized by $\mathcal Y_{\bullet}$ and $Y_{\bullet}$ respectively have $p$ rows.\qed\end{rmk}

\subsection{$\mathbb S_{\mu}S^d$ via $\bigotimes\nolimits^pS^d$}

We investigate the asymptotics of the Schur decomposition of $\mathbb S_{\mu}S^d$ when $\mu$ is a fixed partition of $p$. Denote by $V_{\mu}$ an irreducible complex $\Sigma_p$-representation of weight $\mu$.

\begin{thm}\label{t2p}Fix $p\geq 1$, and let $\mu$ be a partition of $p$. Then 
\begin{enumerate}[i)]
\item $\lim_{d\to\infty}N(\mathbb S_{\mu}S^d)/d^{p\choose 2}=\frac{\dim V_{\mu}}{p!}\cdot\lim_{d\to\infty}N(\bigotimes^pS^d)/d^{p\choose2}$.
\item As $d$ grows, $c(\mathbb S_{\mu}S^d)=\Theta(d^{p-1})$.
\end{enumerate}
\end{thm}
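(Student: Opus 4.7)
Plan.  My approach is to use Schur--Weyl duality together with character theory to transfer the asymptotic count of Schur subfunctors of $\bigotimes^p S^d$ from Theorem~\ref{tensorsymthm} to $\mathbb{S}_\mu S^d$. The classical $\Sigma_p\times GL(V)$-bimodule decomposition
\[
\bigotimes^p S^d V\;=\;\bigoplus_{\nu\vdash p} V_\nu\otimes \mathbb{S}_\nu S^d V
\]
(with $\Sigma_p$ permuting tensor factors) passes to $U_n$-invariants, for $n=\dim V$ sufficiently large, yielding the $\Sigma_p$-representation
\[
W\;:=\;\bigl((S^d V)^{\otimes p}\bigr)^{U_n}\;=\;\bigoplus_\nu V_\nu\otimes (\mathbb{S}_\nu S^d V)^{U_n}.
\]
By Fact~5, $\dim W = N(\bigotimes^p S^d)$ and $\dim (\mathbb{S}_\mu S^d V)^{U_n}=N(\mathbb{S}_\mu S^d)$. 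Character orthogonality then identifies
\[
N(\mathbb{S}_\mu S^d)\;=\;\frac{1}{p!}\sum_{\sigma\in\Sigma_p}\chi_\mu(\sigma)\,\mathrm{tr}(\sigma\mid W),
\]
the identity term already producing the claimed constant $\frac{\dim V_\mu}{p!}\cdot\lim N(\bigotimes^p S^d)/d^{\binom p2}$ via Theorem~\ref{tensorsymthm}(ii). The task for part $i)$ is thus to show $\mathrm{tr}(\sigma\mid W)=o(d^{\binom p 2})$ for each fixed $\sigma\neq e$.

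For this trace bound I would apply Cauchy's identity $S^{\bullet}(\mathbb{C}^p\otimes V)=\bigoplus_\lambda \mathbb{S}_\lambda \mathbb{C}^p\otimes \mathbb{S}_\lambda V$ and extract the $(d,\ldots,d)$-multidegree on the $\mathbb{C}^p$-factor, to identify
\[
W\;\cong\;\bigoplus_{\lambda\vdash pd,\;\ell(\lambda)\le p}(\mathbb{S}_\lambda\mathbb{C}^p)_{(d^p)}
\]
as a $\Sigma_p$-module (the Weyl-group action on the $(d^p)$-weight space). Hence $\mathrm{tr}(\sigma\mid W)$ is a sum of integer traces on weight spaces, each bounded in absolute value by $K_{\lambda,(d^p)}$. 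Substantial cancellation occurs: already for $p=2$, each $(\mathbb{S}_\lambda\mathbb{C}^2)_{(d^2)}$ is one-dimensional and alternates between trivial and sign representations as $k=\lambda_2$ varies, giving $|\mathrm{tr}(\sigma\mid W)|\le 1=o(d)$. For general $p$ one writes the trace as the coefficient of $t_1^d\cdots t_p^d$ in $s_\lambda(\sigma\cdot\mathrm{diag}(t_1,\ldots,t_p))$ and, via the Frobenius character formula, translates the sum into an alternating lattice-point count on a proper subfamily of the cross-section $\mathcal Y$ from the proof of Theorem~\ref{tensorsymthm}. The cycle structure of $\sigma\neq e$ imposes a nontrivial linear constraint that strictly reduces the dimension of the relevant sub-polytope, yielding $|\mathrm{tr}(\sigma\mid W)|=o(d^{\binom p 2})$.

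For part $ii)$, the upper bound $c(\mathbb{S}_\mu S^d)\le c(\bigotimes^p S^d)=\Theta(d^{p-1})$ is immediate from Theorem~\ref{tensorsymthm}(i). For the matching lower bound, the identification $(\mathbb{S}_\mu S^d V,\lambda)=\dim\mathrm{Hom}_{\Sigma_p}(V_\mu,(\mathbb{S}_\lambda\mathbb{C}^p)_{(d^p)})$ reduces the task to exhibiting $\Omega(d^{p-1})$ partitions $\lambda\in Y_d$ for which $V_\mu$ appears in the $(d^p)$-weight space of $\mathbb{S}_\lambda\mathbb{C}^p$. By Remark~\ref{few}, asymptotically almost every $\lambda\in Y_d$ has exactly $p$ parts; among these, for sufficiently regular $\lambda$ (parts strictly decreasing with gaps bounded below by a constant depending on $\mu$), standard results on Weyl-group actions on zero-weight spaces guarantee that every irreducible $V_\mu$ occurs. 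Counting such regular $\lambda$ inside $Y_d$ yields the required $\Theta(d^{p-1})$ lower bound.

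The main obstacle is the trace bound in part $i)$: a naive estimate via $\dim W^\sigma$ is insufficient, since already for $p=2$ the transposition satisfies $\dim W^\sigma=N(S^2 S^d)=\Theta(d)$, of the same order as $\dim W$ itself. The proof must therefore exploit genuine signed cancellation through a careful convex-geometric refinement of the arguments in Theorem~\ref{tensorsymthm}.
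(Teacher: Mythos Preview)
Your setup for part $i)$ coincides with the paper's: one passes to $U_p$-invariants, uses the Schur--Weyl decomposition $\bigotimes^p S^d=\bigoplus_\nu V_\nu\otimes\mathbb S_\nu S^d$, and is left with showing that $\mathrm{tr}(\sigma\mid B_d)=o(\dim B_d)$ for each $\sigma\neq e$, where $B_d=((S^d\C^p)^{\otimes p})^{U_p}$. The paper does \emph{not} attempt to prove this trace bound by hand. Instead it observes that $B=\bigoplus_d B_d$ is a finitely generated graded $\C$-algebra (Grosshans \cite{Gr}) on which $\Sigma_p$ acts by graded algebra automorphisms, verifies that no $\sigma\neq e$ acts by a scalar in each degree, and then invokes Howe's theorem \cite{Ho}, which packages exactly the asymptotic $\dim(V_\mu\mbox{-isotypic part of }B_d)\sim\frac{(\dim V_\mu)^2}{p!}\dim B_d$. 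Your sketch via ``alternating lattice-point counts'' and ``a nontrivial linear constraint that strictly reduces the dimension'' is, as you yourself flag, the crux, and it is not carried out: you would essentially be reproving Howe's theorem in this case, and the cancellation mechanism you allude to (coefficient extraction in $s_\lambda(\sigma\cdot\mathrm{diag}(t))$, reduction to a lower-dimensional polytope) is not made precise. So part $i)$ has a genuine gap at exactly the point you identify; the fix is to use the algebra structure on $B$ and cite \cite{Ho}.

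For part $ii)$ your route is more complicated than necessary. The paper derives $ii)$ directly from $i)$ via Lemma~\ref{maxgrowthsubts}: since $\mathbb S_\mu S^d$ is (noncanonically) a subfunctor of $\bigotimes^p S^d$ with $N(\mathbb S_\mu S^d)=\Theta(d^{\binom p2})$, a moment-map argument bounding the maximal multiplicity $\overline m_{p,d}$ by $O(d^{\binom{p-1}2})$ forces $c(\mathbb S_\mu S^d)=\Theta(d^{p-1})$. Your proposed argument, exhibiting $\Omega(d^{p-1})$ partitions $\lambda$ for which $V_\mu$ occurs in the Weyl-group module $(\mathbb S_\lambda\C^p)_{(d^p)}$, is plausible but relies on an unspecified ``standard result'' about which $V_\mu$ appear in such weight-space representations; this is nontrivial (and note $(d^p)$ is not the zero weight for $GL_p$, so the usual Kostant-type statements need to be adapted). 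The paper's lemma sidesteps all of this.
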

\begin{proof}
We have inclusions of graded (by $d)$ vector spaces $A,B,C$ (whose graded pieces are denoted $A_d, B_d, C_d$ respectively):
$$A:=\bigoplus_{d \geq 0} (V_{\mu}\otimes\mathbb S_{\mu}S^d\C^p)^{U_p} \hookrightarrow B:=\bigoplus_{d \geq 0}(\bigotimes\nolimits^p S^d\C^p)^{U_p} \hookrightarrow C:=\bigoplus_{d \geq 0}\bigotimes\nolimits^pS^d\C^p.$$ 
The first inclusion is induced by the functorial decomposition $$\bigotimes^p=\bigoplus_{\mu\vdash p}V_{\mu}\otimes\mathbb S_{\mu}.$$ Observe that $C$ is the section ring on the $p$-fold product of $\P(\C^p)$ of $\O_{\P(\C^p)^{\times p}}(1,\ldots,1)$, and $B$ also has an algebra structure. The action of $GL_p$ on $\C^p$ induces a $GL_p$-representation structure on $C_d$ for all $d$. The symmetric group $\Sigma_p$ also acts on $C_d=\bigotimes^pS^d\C^p$ by permuting the factors of the tensor product. The two actions commute, hence the $\Sigma_p$-action restricts to $B$. We have the following:
\begin{enumerate}
\item By \cite[Thm. 16.2]{Gr}, $B$ is finitely generated. 
\item No nontrivial $\sigma$ in $\Sigma_p$ acts as a scalar (depending only on $\sigma$ and $d$) on $B_d$ for each $d$. Otherwise, by the commutativity of the actions of $\Sigma_p$ and $GL_p$, it also acts as a scalar on the $GL_p$-span of $B_d$ in $C_d$, which is the entire $C_d$ by Fact 5. 
\end{enumerate}
Using \cite{Ho} (see \cite{P} for a geometric perspective) and $\S 0$ Fact 5, these give:
$$\dim A_d\sim\frac{(\dim V_{\mu})^2}{|\Sigma_p|}\cdot\dim B_d.$$
By Theorem \ref{tensorsymthm} we have part $i)$. Part $ii)$ is a consequence of $i)$ and Lemma \ref{maxgrowthsubts}.\end{proof} 

\begin{lem}\label{maxgrowthsubts}Fix $p\geq 1$, and let $\mathcal F_d$ be a sequence of subfunctors of $\bigotimes^pS^d$ so that, as $d$ grows, we have $N(\mathcal F_d) \in \Theta(d^{p\choose 2})$.  Then $c(\mathcal F_d) \in \Theta(d^{p-1})$.\end{lem}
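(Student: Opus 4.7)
The plan is to prove matching upper and lower bounds for $c(\mathcal{F}_d)$. The upper bound $c(\mathcal{F}_d) = O(d^{p-1})$ is automatic from Theorem \ref{tensorsymthm}.i): since $\mathcal{F}_d$ is a subfunctor of $\bigotimes^p S^d$, every Schur type appearing in $\mathcal{F}_d$ also appears in $\bigotimes^p S^d$, and hence $c(\mathcal{F}_d) \leq c(\bigotimes^p S^d) = \Theta(d^{p-1})$.

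For the lower bound I would invoke the elementary pigeonhole inequality
$$
N(\mathcal{F}_d) \;\leq\; c(\mathcal{F}_d)\cdot m_d, \qquad \text{where}\quad m_d := \max_{\lambda}\bigl(\bigotimes\nolimits^p S^d,\,\lambda\bigr).
$$
This holds because $(\mathcal{F}_d,\lambda) \leq (\bigotimes^p S^d,\lambda) \leq m_d$ for every $\lambda$, as $\mathcal{F}_d$ is a subfunctor. Combined with the hypothesis $N(\mathcal{F}_d) = \Theta(d^{\binom{p}{2}})$, the desired lower bound $c(\mathcal{F}_d) = \Omega(d^{p-1})$ reduces to the uniform Kostka estimate $m_d = O(d^{\binom{p-1}{2}})$, using the identity $\binom{p}{2} - \binom{p-1}{2} = p-1$.

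To establish this Kostka bound I would reuse the coordinates $(t_{ij})_{i<j}$ parameterizing $\mathcal{Y}_d$ in the proof of Theorem \ref{tensorsymthm}.ii), which enumerate SSYTs of weight $(d^p)$. For a fixed target shape $\lambda \vdash pd$ with at most $p$ parts, prescribing the row lengths $\lambda_i = \sum_{j \geq i} t_{ij}$ (with $t_{ii} := d - \sum_{k<i} t_{ki}$) translates into $p-1$ independent affine-linear equations on the $(t_{ij})_{i<j}$; one of the $p$ row-length equations is redundant because $\sum_i \lambda_i = pd$ is forced by the weight condition. The resulting integer slice therefore has dimension $\binom{p-1}{2}$, and since each $t_{ij}$ is confined to $\{0, 1, \ldots, d\}$, a standard estimate (solve for $p-1$ pivot coordinates in terms of the remaining $\binom{p-1}{2}$ free ones, each ranging in $[0,d]$) yields $K_{\lambda,(d^p)} = O(d^{\binom{p-1}{2}})$.

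The main point to verify is the \emph{uniformity} in $\lambda$ of the Kostka bound---a single constant $C$ with $K_{\lambda,(d^p)} \leq C\cdot d^{\binom{p-1}{2}}$ for every admissible $\lambda$ and every $d$---since without it the pigeonhole step produces no polynomial lower bound. This uniformity is mild: $\lambda$ enters the defining equations only as additive integer constants, and the crude enumeration $(d+1)^{\binom{p-1}{2}}$ of free-variable tuples in $[0,d]$ is already $\lambda$-independent. Everything else is routine bookkeeping.
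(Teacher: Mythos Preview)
Your proposal is correct and follows essentially the same route as the paper's proof: the upper bound comes from $c(\mathcal F_d)\leq c(\bigotimes^pS^d)$ and Theorem~\ref{tensorsymthm}.i), and the lower bound comes from the pigeonhole inequality $N(\mathcal F_d)\leq c(\mathcal F_d)\cdot\overline m_{p,d}$ together with the bound $\overline m_{p,d}=O(d^{\binom{p-1}{2}})$ obtained by slicing $\mathcal Y_d$ by the $(p-1)$ row-length equations. The only cosmetic difference is that the paper phrases the Kostka bound via an abstract change of $\mathbb Z$-basis making the moment map a coordinate projection and then boxing the fiber in $[-dl,dl]^{\binom{p-1}{2}}$, whereas you pick explicit pivots (effectively $t_{1i}$ for $i\geq 2$) and bound the free variables by $[0,d]^{\binom{p-1}{2}}$; both arguments yield the needed uniform bound.
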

\begin{proof}Consider the moment map $$\mu:\mathbb R^{p\choose 2}\times\mathbb R\to \mathbb R^{p-1}\times\mathbb R,\quad \mu((t_{ij})_{i<j},d):=(\lambda_2,\ldots,\lambda_p,d),\mbox{ where}$$
$$\lambda_i:=\sum_{j=i}^pt_{ij},\mbox{ for all }1\leq i\leq p,\mbox{ and}\quad t_{ii}:=d-\sum_{k=1}^{i-1}t_{ki},\mbox{ for all }1\leq i\leq p.$$
By the constructions of the previous section, $\mu$ maps $\mathcal Y$ onto $Y$, and $\mathcal Y_d$ onto $Y_d$ for all $d$. By assumption, there exists $C_1>0$ such that for large $d$, $$N(\mathcal F_d)\geq C_1 \cdot d^{p\choose 2}.$$ On the other hand, denoting by $\overline m_{p,d}$ the maximal multiplicity of a Schur functor in $\bigotimes^pS^d$, which contains $\mathcal F_d$, we have
$$N(\mathcal F_d)\leq c(\mathcal F_d)\cdot \overline m_{p,d}.$$
By Theorem \ref{tensorsymthm} i) we obtain $c(\mathcal F_d)\leq c(\bigotimes^pS^d) \leq C_2 \cdotp (d^{p-1})$, it is enough to show that $$\overline m_{p,d}\leq C'\cdot d^{{p-1}\choose 2}$$ for some $C'>0$ independent of $d$. To see this, choose a basis for $\mathbb Z^{p\choose 2}\subset\mathbb R^{p\choose2}$ so that $\mu$ is the projection onto the last $p$ coordinates. Then choose an integer $l>0$ such that $$\mathcal Y\subset[-l,l]^{{p-1}\choose 2}\times Y\subset\mathbb R^{p\choose2}\times\{1\}.$$ Then $\mathcal Y_d\subset [-dl,dl]^{{p-1}\choose2}\times Y_d$, so $\overline m_{p,d}\leq \#([-dl,dl]\cap\mathbb Z)^{{p-1}\choose2}$. We can set $C'=(3l)^{{p-1}\choose2}$.\end{proof}

\begin{cor}\label{t2sw}
Fix $p\geq 1$. Then 
$$\frac1{p!}N(\bigotimes\nolimits^pS^d) \sim N(S^pS^d) \sim N(\bigwedge\nolimits^pS^d).$$
\end{cor}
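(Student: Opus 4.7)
The plan is to derive the corollary as an immediate consequence of Theorem \ref{t2p}.i) applied to two specific partitions of $p$. Recall that the Schur functor $\mathbb{S}_{(p)}$ corresponding to the one-row partition $\mu = (p)$ is the symmetric power $S^p$, and the Schur functor $\mathbb{S}_{(1^p)}$ corresponding to the one-column partition $\mu = (1^p)$ is the exterior power $\bigwedge^p$. These are the two ``extreme'' Schur functors whose compositions with $S^d$ we must analyze.

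The key observation is that the corresponding irreducible $\Sigma_p$-representations $V_{(p)}$ and $V_{(1^p)}$ are respectively the trivial and sign representations, both of dimension $1$. So setting $\mu = (p)$ in Theorem \ref{t2p}.i) gives
\[
\lim_{d\to\infty} \frac{N(S^p S^d)}{d^{\binom{p}{2}}} = \frac{1}{p!}\cdot\lim_{d\to\infty}\frac{N(\bigotimes^p S^d)}{d^{\binom{p}{2}}},
\]
and setting $\mu = (1^p)$ gives
\[
\lim_{d\to\infty} \frac{N(\bigwedge^p S^d)}{d^{\binom{p}{2}}} = \frac{1}{p!}\cdot\lim_{d\to\infty}\frac{N(\bigotimes^p S^d)}{d^{\binom{p}{2}}}.
\]
Since by Theorem \ref{tensorsymthm}.ii) the limit on the right is a finite positive number, both $N(S^p S^d)$ and $N(\bigwedge^p S^d)$ are asymptotic to $\frac{1}{p!} N(\bigotimes^p S^d)$, which is the claim.

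There is no real obstacle here; the work has all been done in Theorem \ref{t2p}. The only thing worth double-checking is the identification of $V_{(p)}$ and $V_{(1^p)}$ as one-dimensional $\Sigma_p$-representations and the matching of $\mathbb{S}_{(p)} = S^p$ and $\mathbb{S}_{(1^p)} = \bigwedge^p$, both of which are standard from \cite{FH}.
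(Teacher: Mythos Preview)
Your proof is correct and matches the paper's own argument: the paper simply says to apply Theorem \ref{t2p} to the trivial and alternating representations of $\Sigma_p$, which is exactly what you do (with a bit more detail on the identifications $\mathbb{S}_{(p)}=S^p$, $\mathbb{S}_{(1^p)}=\bigwedge^p$, and $\dim V_{(p)}=\dim V_{(1^p)}=1$).
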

\begin{proof}
This follows by applying Theorem \ref{t2p} for the trivial and alternating representations of $\Sigma_p$.
\end{proof}

\begin{rmk}\label{t2sc}
 \cite[Lem. 2.2]{BCI} can be adjusted to show that if $\mathbb S_{\lambda}(\C^p)$ and $\mathbb S_{\lambda'}(\C^p)$ appear in the decompositions of $S^pS^d\C^p$ and $S^pS^{d'}\C^p$ respectively, then $\mathbb S_{\lambda+\lambda'}(\C^p)$ appears in the decomposition of $S^pS^{d+d'}\C^p$.\footnote{The product of two highest weight symmetric vectors is a highest weight symmetric vector} In particular, the set of $\lambda$ with $(S^pS^{d},\lambda)>0$, for some $d$, is a subsemigroup of $Y_{\bullet}$. Since $S^pS^d\neq 0$ for all $d\geq0$, the semigroup is nonempty in all degrees. The existence and finiteness of  $$\lim_{d\to\infty}c(S^pS^d)/d^{p-1}$$ are easy applications of the semigroup techniques developed in \cite{Kh} and \cite[\S 2.1]{LM}. Also,
$$c(\bigwedge\nolimits^pS^d)\sim c(S^pS^d).$$ We leave it to the reader to deduce this from Remark \ref{few} and from the following:
\end{rmk}

\begin{lem}[Newell]\label{Newell}For any partition $\lambda=(\lambda_1\geq\ldots\geq\lambda_p\geq0)$, we have
\begin{enumerate}[i)]
\item $(\bigwedge^pS^{d+1},(\lambda_1+1,\ldots,\lambda_p+1))=(S^pS^d,\lambda)$.
\item $(S^pS^{d+1},(\lambda_1+1,\ldots,\lambda_p+1))=(\bigwedge^pS^d,\lambda)$.
\end{enumerate}
\end{lem}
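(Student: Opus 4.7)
The plan is to reduce both parts to statements at the level of $GL(V)$-representations by invoking \S0 Fact 5: since all Schur subfunctors in sight involve partitions with at most $p$ parts, it suffices to compare irreducible decompositions of the corresponding $GL_p$-representations on $\C^p$. For $\dim V = p$, one has $\BS_{\lambda + (1^p)}(V) \cong \det V \otimes \BS_\lambda(V)$ for every partition $\lambda$ with at most $p$ parts, so part (i) is equivalent to exhibiting a $GL(V)$-equivariant isomorphism between $\det V \otimes S^p(S^d V)$ and the direct sum of those $\BS_\mu$-isotypic components of $\bigwedge^p(S^{d+1}V)$ with $\mu_p \geq 1$. (The remaining $\mu_p = 0$ isotypic components of $\bigwedge^p(S^{d+1}V)$ are not touched by the lemma.) Part (ii) is analogous with $\bigwedge^p$ and $S^p$ swapped.

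I would construct this isomorphism via a natural $GL(V)$-equivariant pair of maps
\begin{align*}
\Phi\colon \det V \otimes S^p(S^d V) &\to \bigwedge\nolimits^p(S^{d+1}V), \\
\Psi\colon \bigwedge\nolimits^p(S^{d+1}V) &\to \det V \otimes S^p(S^d V),
\end{align*}
defined on pure tensors by
\[
\Phi\bigl((v_1\wedge\cdots\wedge v_p)\otimes f_1\cdots f_p\bigr) = \sum_{\sigma\in \Sigma_p} \operatorname{sgn}(\sigma)(v_{\sigma(1)}f_1)\wedge \cdots \wedge (v_{\sigma(p)}f_p),
\]
\[
\Psi(f_1\wedge\cdots\wedge f_p) = (e_1\wedge\cdots\wedge e_p)\otimes \det\bigl(\partial_{e_i}f_j\bigr)_{1\leq i,j\leq p},
\]
where $v \cdot f \in S^{d+1}V$ is symmetric-algebra multiplication, $\partial_{e_i}$ denotes partial differentiation with respect to a fixed basis vector $e_i$ of $V$, and the determinant is computed in the commutative ring $S^d V$ and interpreted as an element of $S^p(S^d V)$. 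Routine verifications show both maps are well-defined on the exterior/symmetric quotients; the $GL(V)$-equivariance of $\Psi$ uses that under a change of basis by $g$ the Jacobian matrix picks up a factor of $\det(g)$, which is exactly the factor absorbed by the $\det V$ tensor factor on the target.

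The crucial remaining step is to evaluate the two compositions. Expanding the determinant in $\Psi\circ\Phi$ by Leibniz and applying the Euler identity $\sum_i e_i\partial_{e_i}f = \deg(f)\cdot f$ to collapse the cross terms, one finds that $\Psi\circ\Phi$ acts by a nonzero scalar on every irreducible summand of $\det V \otimes S^p(S^d V)$; for example, when $p = 2$ one computes directly that $\Psi\circ\Phi = 2(d+1)\cdot\operatorname{id}$. This gives injectivity of $\Phi$ and hence the inequality $(\bigwedge^p S^{d+1}, \lambda + (1^p)) \geq (S^p S^d, \lambda)$. A symmetric argument shows $\Phi\circ\Psi$ acts by a nonzero scalar on each $\BS_\mu$-isotypic component of $\bigwedge^p(S^{d+1}V)$ with $\mu_p \geq 1$, yielding the reverse inequality and completing (i); part (ii) follows by the same scheme after swapping $\bigwedge^p$ and $S^p$ in the definitions of $\Phi$ and $\Psi$. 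The hard part will be verifying uniformly that these scalars are nonzero on every isotypic summand; the cleanest approach is to evaluate each composition on an explicit highest-weight vector of each irreducible summand, for which the determinant collapses via the Euler identity into a manifestly nonzero polynomial expression in $d$ and the parts of $\lambda$.
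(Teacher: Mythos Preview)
The paper does not prove this lemma at all: it is stated with attribution to Newell and supported only by the bibliographic reference [N] (M.~J.~Newell, \emph{A theorem on the plethysm of $S$-functions}, 1951). So there is no ``paper's own proof'' to compare against; Newell's original argument is purely character-theoretic, manipulating Schur functions and the Jacobi--Trudi identity rather than constructing explicit equivariant maps.

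Your approach is a genuine alternative strategy, and the maps $\Phi$ and $\Psi$ are correctly defined, well-defined on the relevant quotients, and $GL(V)$-equivariant (your $p=2$ computation $\Psi\circ\Phi=2(d+1)\cdot\mathrm{id}$ checks out). However, the proposal as written has a real gap, which you yourself identify: you assert that $\Psi\circ\Phi$ and $\Phi\circ\Psi$ act by \emph{nonzero} scalars on every relevant isotypic component, but you do not prove it. Schur's lemma only tells you these compositions act by \emph{some} scalar on each isotypic block; the entire content of the lemma is that none of those scalars vanish. Saying ``apply the Euler identity to collapse the cross terms'' is not a proof---for $p\geq 3$ the Leibniz expansion of $\Psi\circ\Phi$ involves a sum over pairs of permutations and products of partial derivatives of products, and it is far from clear that the result is a single nonzero scalar times the identity (indeed, there is no a priori reason the scalar should be the same on different isotypic components). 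Your suggested fallback, evaluating on an explicit highest-weight vector in each irreducible summand, is a correct strategy in principle, but highest-weight vectors in $S^p(S^dV)$ are already complicated objects (products of minors of the generic matrix, parameterized by semistandard tableaux), and carrying this out uniformly in $\lambda$ is essentially as hard as the theorem itself.

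In short: you have reduced the problem to a concrete nonvanishing statement and verified it for $p=2$, but the general case is left as an unexecuted plan. If you want a representation-theoretic (rather than symmetric-function) proof along these lines, one cleaner route is to show $\Phi$ is injective directly by exhibiting a filtration or leading-term argument on monomials, avoiding $\Psi$ altogether; another is to pass to characters and recover Newell's original identity $e_p[h_{d+1}]\big|_{x_1,\ldots,x_p}=(x_1\cdots x_p)\cdot h_p[h_d]\big|_{x_1,\ldots,x_p}+(\text{terms with }\lambda_p=0)$, which is what the lemma expresses.
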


\begin{rmk}[Explicit constructions]\label{wexplicit} Answering a question from \cite[Conj. 2.11]{Wei}, in \cite{BCI} it is shown that if $\lambda\vdash pd$ has $|\lambda|\leq p$, then $$\mathbb S_{2\lambda}\mbox{ appears in the decomposition of }S^pS^{2d}\mbox{, i.e., }(S^pS^{2d},2\lambda)>0.$$ In \cite{MM} we find a constructive proof for this result. 
Because of the semigroup structure on types of partitions contributing to $S^pS^{\bullet}$, and using Newell's result, it also holds that
$$(\bigwedge^p S^{2d+1},2\lambda+(1^p))=(S^pS^{2d},2\lambda)>0,\quad\mbox{and}$$
$$(\bigwedge^p S^{2d+2},2\lambda+(1^p)+(p))=(S^pS^{2d+1},2\lambda+(p))>0.$$ 
Combining with Theorem \ref{tensorsymthm}.ii), these provide a constructive proof for Theorem \ref{t2p}.ii) in the particular cases when $\mu=(p)$ or $\mu=(1^p)$, but do not show that the corresponding limits exist, as it is the case in Remark \ref{t2sc}. The explicit constructions also show that $\lim_{d\to\infty}\frac{c(S^pS^d)}{d^{p-1}}\geq\frac1{2^{p-1}}\cdot\lim_{d\to\infty}\frac{c(\bigotimes^pS^d)}{d^{p-1}}$. Note that $\frac1{2^{p-1}}>\frac1{p!}$, when $p\geq3$. In particular, it is not true in general that $c(S^pS^d)\sim\frac1{p!}c(\bigotimes^pS^d)$.
\end{rmk}

In the next section we will also use the following result:
\begin{prop}\label{asy1}Fix $p\geq1$. Then, as $d$ grows, $N(\bigwedge^pS^d\otimes S^d)\sim\frac1{p!}\cdot N(\bigotimes^{p+1}S^d).$\end{prop}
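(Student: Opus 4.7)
The plan is to realize $\bigwedge^p S^d\otimes S^d$ as the image of a single idempotent inside $\bigotimes^{p+1}S^d$, reduce its total multiplicity to a character computation on a $\Sigma_{p+1}$-representation, and then invoke Theorem \ref{t2p}.i) to show that the relevant character is asymptotically that of the regular representation of $\Sigma_{p+1}$.

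For the setup, I take $e_p:=\frac{1}{p!}\sum_{\sigma\in\Sigma_p}\mathrm{sgn}(\sigma)\,\sigma$, regarded as an idempotent in $\mathbb C[\Sigma_{p+1}]$ via the inclusion $\Sigma_p\subset\Sigma_{p+1}$ fixing the last index. Acting on $\bigotimes\nolimits^{p+1}S^d\mathbb C^{p+1}$ by permutation of factors, $e_p$ antisymmetrizes the first $p$ factors and so cuts out $\bigwedge\nolimits^p S^d\mathbb C^{p+1}\otimes S^d\mathbb C^{p+1}$. Since this $\Sigma_{p+1}$-action commutes with $GL_{p+1}$, and in particular with $U_{p+1}$, the idempotent also acts on $B_d:=(\bigotimes\nolimits^{p+1}S^d\mathbb C^{p+1})^{U_{p+1}}$, with image exactly $(\bigwedge\nolimits^p S^d\mathbb C^{p+1}\otimes S^d\mathbb C^{p+1})^{U_{p+1}}$. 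The functors $\bigwedge^p S^d\otimes S^d$ and $\bigotimes^{p+1}S^d$ both decompose into Schur functors whose partitions have at most $p+1$ parts (for the first, note that $\bigwedge^p S^d$ has partitions of length $\leq p$, and Pieri's rule adds at most one row upon tensoring with $S^d$), so Fact 5 of \S\ref{Facts from Representation Theory} identifies their total multiplicities with the dimensions of these $U_{p+1}$-invariants:
\[
N(\bigwedge\nolimits^p S^d\otimes S^d)=\mathrm{tr}(e_p,B_d)=\tfrac{1}{p!}\sum_{\sigma\in\Sigma_p}\mathrm{sgn}(\sigma)\,\mathrm{tr}(\sigma,B_d),\qquad N(\bigotimes\nolimits^{p+1}S^d)=\dim B_d.
\]

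The key step will be to show that $\mathrm{tr}(\sigma,B_d)=o(\dim B_d)$ for every nontrivial $\sigma\in\Sigma_p$; combined with the $\sigma=e$ contribution $\tfrac{1}{p!}\dim B_d$, this gives the desired asymptotic. To this end I will decompose $\bigotimes^{p+1}=\bigoplus_{\mu\vdash p+1}V_\mu\otimes\BS_\mu$ to identify $B_d\cong\bigoplus_\mu V_\mu^{\oplus N(\BS_\mu S^d)}$ as a $\Sigma_{p+1}$-representation, and apply Theorem \ref{t2p}.i) with $p$ replaced by $p+1$, obtaining $N(\BS_\mu S^d)/\dim B_d\to\dim V_\mu/(p+1)!$. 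Consequently, for each $\sigma\in\Sigma_{p+1}$,
\[
\frac{\mathrm{tr}(\sigma,B_d)}{\dim B_d}=\sum_{\mu\vdash p+1}\frac{N(\BS_\mu S^d)}{\dim B_d}\,\chi_\mu(\sigma)\longrightarrow\tfrac{1}{(p+1)!}\sum_\mu(\dim V_\mu)\,\chi_\mu(\sigma)=\tfrac{1}{(p+1)!}\chi_{\mathrm{reg}}(\sigma),
\]
which equals $1$ if $\sigma=e$ and $0$ otherwise; substituting back into the formula for $N(\bigwedge^p S^d\otimes S^d)$ finishes the argument.

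The only substantive input is Theorem \ref{t2p}.i), and its use is what really drives the statement: it forces the $\Sigma_{p+1}$-action on $B_d$ to be asymptotically a multiple of the regular representation, after which the antisymmetrizer $e_p$ automatically extracts the $\tfrac{1}{p!}$ factor. I do not anticipate any obstacle beyond the elementary character-theoretic bookkeeping above.
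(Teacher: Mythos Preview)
Your argument is correct and is essentially the paper's approach: both boil down to the fact that the $U_{p+1}$-invariants $B_d$ asymptotically carry a multiple of the regular representation of the relevant symmetric group, after which the antisymmetrizer on the first $p$ factors extracts the $1/p!$. The only packaging difference is that the paper reruns the Howe argument of Theorem~\ref{t2p} directly for the $\Sigma_p$-action on the first $p$ tensor factors, whereas you invoke the already-proved Theorem~\ref{t2p}.i) for all $\mu\vdash p+1$ and then restrict to $\Sigma_p$ via the character identity $\sum_\mu(\dim V_\mu)\chi_\mu=\chi_{\mathrm{reg}}$; these are equivalent since the restriction of the regular representation of $\Sigma_{p+1}$ to $\Sigma_p$ is again a multiple of the regular representation.
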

\begin{proof}This is the same argument as for Corollary \ref{t2sw}. Apply Theorem \ref{t2p}.i) to the alternating representation of $\Sigma_p$ with the action on the first $p$ tensor factors of $\bigotimes^{p+1}S^d$.\end{proof}

\section{Asymptotic syzygy functors}
Given $p,q,b,d\geq 0$, we defined the syzygy functor $\Bpq$ of the $d$-th Veronese embedding as the cohomology of the functorial Koszul-type complex
$$\bigwedge\nolimits^{p+1}S^d\otimes S^{(q-1)d+b}\to\bigwedge\nolimits^{p}S^d\otimes S^{qd+b}\to\bigwedge\nolimits^{p-1}S^d\otimes S^{(q+1)d+b}.$$
In the introduction we explained that if we fix $p,q,b\geq 0$ and let $d$ grow to infinity, we only have nontrivial behavior in $\Bpq$ when $p\geq 1$, and either $q=1$, or $q=0$ and $b\geq 1$. In this section we determine the precise asymptotic orders of growth for the total multiplicity and complexity of $\BK_{p,1}(0;d)$ and $\BK_{p,0}(b;d)$ in the nontrivial case $b\geq1$. In the next section we will give a partial result for $\BK_{p,1}(b;d)$ when $b\geq 1$.

\subsection{$\mathbb{K}_{p,1}(d)$}
As we did in the introduction, we write $\BK_{p,1}(d)$ for $\BK_{p,1}(0;d)$, i.e., the cohomology of the Koszul-type complex
\begin{equation}\label{kpqeq}\bigwedge\nolimits^{p+1}S^d\to \bigwedge\nolimits^pS^d\otimes S^d\to\bigwedge\nolimits^{p-1}S^d\otimes S^{2d}.\end{equation}
The asymptotics of the decomposition of $\BK_{p,1}(d)$ are described by:
\begin{thm}\label{kp10d}Fix $p\geq 1$. As $d$ goes to infinity:
\begin{enumerate}[i)]
\item $N(\BK_{p,1}(d)) \sim \frac{p}{(p+1)!}\cdotp N(\bigotimes\nolimits^{p+1}S^d)$.
\item $c(\BK_{p,1}(d))\in \Theta(d^p).$
\end{enumerate}
\end{thm}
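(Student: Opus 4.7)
The plan is to estimate the total multiplicity and the complexity of $\BK_{p,1}(d)$ by comparing each term of the Koszul-type complex \eqref{kpqeq} to $\bigotimes^{p+1}S^d$ via the asymptotic results of Section~1. The key preliminary observation is that the leftmost differential $d_{p+1}\colon\bigwedge^{p+1}S^d\to\bigwedge^p S^d\otimes S^d$ is injective: \eqref{kpqeq} is the $q=1$ part of a longer Koszul complex whose term to the left of $\bigwedge^{p+1}S^d$ involves $S^{(q-2)d}=S^{-d}=0$, so the cohomology there is $\BK_{p+1,0}(d)$, which vanishes by the elementary fact recalled in the introduction. Since extraction of $\BS_\lambda$-isotypic components is exact in characteristic zero, this gives for every partition $\lambda$
\[
(\BK_{p,1}(d),\lambda)=(\bigwedge\nolimits^p S^d\otimes S^d,\lambda)-(\bigwedge\nolimits^{p+1}S^d,\lambda)-(\mathrm{im}\,d_p,\lambda),
\]
where $d_p\colon\bigwedge^p S^d\otimes S^d\to\bigwedge^{p-1}S^d\otimes S^{2d}$ is the right differential.

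For part (i) I sum this identity over $\lambda$. By Proposition~\ref{asy1} the middle term contributes $N(\bigwedge^p S^d\otimes S^d)\sim\tfrac{1}{p!}N(\bigotimes^{p+1}S^d)$, and by Corollary~\ref{t2sw} the left term contributes $N(\bigwedge^{p+1}S^d)\sim\tfrac{1}{(p+1)!}N(\bigotimes^{p+1}S^d)$. The right term is asymptotically negligible: every Schur subfunctor of $\bigwedge^{p-1}S^d\otimes S^{2d}$ has at most $p$ parts (Fact~4 gives at most $p-1$ parts for factors of $\bigwedge^{p-1}S^d$, and Pieri's rule adds at most one further part when tensoring with $S^{2d}$). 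Adapting the semistandard-tableau parametrization of Theorem~\ref{tensorsymthm} to the mixed weight $(d^{p-1},2d)$ produces a convex body of dimension $\binom{p+1}{2}-p=\binom{p}{2}$, so $N(\bigwedge^{p-1}S^d\otimes S^{2d})=O(d^{\binom{p}{2}})=o(N(\bigotimes^{p+1}S^d))$. Combining these yields
\[
N(\BK_{p,1}(d))\sim\Bigl(\tfrac{1}{p!}-\tfrac{1}{(p+1)!}\Bigr)N(\bigotimes\nolimits^{p+1}S^d)=\tfrac{p}{(p+1)!}N(\bigotimes\nolimits^{p+1}S^d).
\]

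For part (ii), the Schur types of $\BK_{p,1}(d)$ form a subset of those of $\bigotimes^{p+1}S^d$, hence $c(\BK_{p,1}(d))\le c(\bigotimes^{p+1}S^d)=\Theta(d^p)$ by Theorem~\ref{tensorsymthm}. For the matching lower bound I apply the moment-map argument of Lemma~\ref{maxgrowthsubts} with $p$ replaced by $p+1$. The displayed identity above yields $(\BK_{p,1}(d),\lambda)\le(\bigotimes^{p+1}S^d,\lambda)$, so the maximal multiplicity of a Schur type in $\BK_{p,1}(d)$ is controlled by $\overline m_{p+1,d}\le C\cdot d^{\binom{p}{2}}$ coming from that lemma; combining $N(\BK_{p,1}(d))\le c(\BK_{p,1}(d))\cdot\overline m_{p+1,d}$ with part (i) then gives $c(\BK_{p,1}(d))\ge C'\cdot d^p$. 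The main expected obstacle is the dimension bookkeeping that makes the right term of \eqref{kpqeq} negligible; once the bound $N(\bigwedge^{p-1}S^d\otimes S^{2d})=o(N(\bigotimes^{p+1}S^d))$ is secured, both (i) and (ii) fall out by a formal combination of the Section~1 asymptotics.
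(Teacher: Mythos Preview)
Your proof is correct and follows essentially the same route as the paper: injectivity of the left Koszul map, negligibility of the rightmost term, then Corollary~\ref{t2sw} and Proposition~\ref{asy1} for part~(i), and Lemma~\ref{maxgrowthsubts} (with $p+1$ in place of $p$) for part~(ii). Your treatment of the negligible term is in fact more explicit than the paper's appeal to Remark~\ref{few}: you directly bound $N(\bigwedge^{p-1}S^d\otimes S^{2d})$ via a tableau parametrization of the correct dimension, whereas the paper's one-line citation of Remark~\ref{few} leaves the reader to reconstruct why a result about weight $(d^{p+1})$ controls weight $(d^{p-1},2d)$.
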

\begin{proof} Since the first map in \eqref{kpqeq} is an inclusion, and by Remark \ref{few} the total multiplicity of the last term is asymptotically insignificant compared to that of $\bigotimes^{p+1}S^d$, we obtain that $$N(\BK_{p,1}(d))\sim N(\bigwedge\nolimits^pS^d\otimes S^d)-N(\bigwedge\nolimits^{p+1}S^d)$$ once we have proved that the right hand side is in $\Theta(N(\bigotimes^{p+1}S^d))$ Using Corollary \ref{t2sw} and Proposition \ref{asy1},
$$N(\bigwedge\nolimits^pS^d \otimes S^d) - N(\bigwedge\nolimits^{p+1}S^d) =  \frac{1}{p!}\cdotp N(\bigotimes\nolimits^{p+1}S^d) - \frac1{(p+1)!}\cdotp N(\bigotimes\nolimits^{p+1}S^d),$$
which is in $\Theta(N(\bigotimes\nolimits^{p+1}S^d))$ and part $i)$ follows. Part $ii)$ is a consequence of Lemma \ref{maxgrowthsubts} after observing that $\BK_{p,1}(d)$ is noncanonically a subfunctor of $\bigotimes^{p+1}S^d$.\end{proof}

\begin{rmk}Unlike with $\bigotimes^{p+1}S^d$ and the plethysms of Remark \ref{wexplicit}, we do not know how to construct explicit examples of $\lambda$ with $(\BK_{p,1}(d),\lambda)>0$, nor do we know if the sequence $c(\BK_{p,1}(d))/d^p$ has a limit. \end{rmk}

\subsection{$\BK_{p,0}(b;d)$}

We describe the asymptotic behavior of 
\begin{equation}\label{kp0eq}\BK_{p,0}(b;d)=\ker(\bigwedge\nolimits^pS^d\otimes S^b\to\bigwedge\nolimits^{p-1}S^d\otimes S^{b+d}),\end{equation} in the the nontrivial cases when only $d$ grows, in the following theorem:

\begin{thm}\label{kp0bd when d grows}Fix $p\geq1$ and $b\geq1$. Then as $d$ grows,
\begin{enumerate}[i)]
\item $N(\BK_{p,0}(b;d))\in \Theta \left(d^{p\choose 2}\right).$
\item $c(\BK_{p,0}(b;d))\in \Theta (d^{p-1}).$
\end{enumerate}
\end{thm}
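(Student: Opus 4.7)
The plan follows Theorem \ref{kp10d} in spirit but exploits a row-count obstruction rather than the asymptotic insignificance of one of the outer terms of the Koszul complex. For $d>b$ one has $S^{b-d}=0$, so the defining complex reduces to
\[
\BK_{p,0}(b;d)=\ker\bigl(\bigwedge\nolimits^p S^d\otimes S^b\xrightarrow{\partial}\bigwedge\nolimits^{p-1}S^d\otimes S^{b+d}\bigr).
\]
The upper bounds are read directly off the inclusion $\BK_{p,0}(b;d)\subseteq\bigwedge^p S^d\otimes S^b$: by Pieri each $\BS_{\lambda'}\otimes S^b$ with $|\lambda'|\le p$ decomposes into at most $\binom{b+p}{p}$ Schur summands, so Corollary \ref{t2sw} and Remark \ref{t2sc}, which give $N(\bigwedge^p S^d)=\Theta(d^{\binom{p}{2}})$ and $c(\bigwedge^p S^d)=\Theta(d^{p-1})$, yield $N(\BK_{p,0}(b;d))=O(d^{\binom{p}{2}})$ and $c(\BK_{p,0}(b;d))=O(d^{p-1})$.

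For the lower bounds, the decisive observation is that $\bigwedge^{p-1}S^d\otimes S^{b+d}$ sits inside $\bigotimes^{p-1}S^d\otimes S^{b+d}$, whose Schur subfunctors have at most $p$ rows, whereas Pieri allows $\bigwedge^p S^d\otimes S^b$ to reach partitions with $p+1$ rows. Any $\BS_\lambda$-isotypic component of the source with $|\lambda|=p+1$ is therefore annihilated by $\partial$ by $GL$-equivariance and lies inside $\BK_{p,0}(b;d)$. For each $\lambda'$ appearing in $\bigwedge^p S^d$ with $\lambda'_p\ge b$, I would set $\lambda:=(\lambda'_1,\ldots,\lambda'_p,b)$, which has $p+1$ parts. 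Pieri admits only $\mu=\lambda'$ as a partition with at most $p$ rows such that $\lambda/\mu$ is a horizontal strip of length $b$, since the $b$ boxes in row $p+1$ must all be removed. Hence $(\BK_{p,0}(b;d),\lambda)=(\bigwedge^p S^d\otimes S^b,\lambda)=(\bigwedge^p S^d,\lambda')$, and distinct $\lambda'$ give distinct $\lambda$.

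It remains to show that the auxiliary condition $\lambda'_p\ge b$ is asymptotically harmless. The partitions $\lambda'\in\bigwedge^p S^d$ with $\lambda'_p<b$ (recall $\lambda'_p\ge 1$ by Lemma \ref{Newell}) satisfy $\lambda'_p\in\{1,\ldots,b-1\}$ and are then determined by a partition of $pd-\lambda'_p$ with at most $p-1$ parts, yielding $O(d^{p-2})$ such $\lambda'$ and combined multiplicity $O(d^{p-2}\cdot\overline m_{p,d})=O(d^{\binom{p}{2}-1})$, where $\overline m_{p,d}=O(d^{\binom{p-1}{2}})$ is the bound on the maximum multiplicity in $\bigotimes^p S^d$ appearing in the proof of Lemma \ref{maxgrowthsubts}. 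Subtracting from $c(\bigwedge^p S^d)$ and $N(\bigwedge^p S^d)$ leaves $\Theta(d^{p-1})$ qualifying $\lambda'$ and total multiplicity $\Theta(d^{\binom{p}{2}})$, proving i) and ii) simultaneously. I expect the main technical nuisance to be this negligibility estimate; the conceptual step, the extra-row construction, is essentially forced by the row-count asymmetry between source and target.
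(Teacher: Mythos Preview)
Your argument is correct and follows essentially the same route as the paper: the key idea in both is the row-count obstruction, namely that any $\BS_\lambda$ with $|\lambda|=p+1$ appearing in $\bigwedge^pS^d\otimes S^b$ cannot occur in $\bigwedge^{p-1}S^d\otimes S^{b+d}$ and hence survives into the kernel, and one manufactures such $\lambda$ by appending a row of length $b$ to partitions $\lambda'$ from $\bigwedge^pS^d$ with $\lambda'_p\ge b$. The only cosmetic differences are that you spell out the negligibility of the condition $\lambda'_p<b$ by an explicit count (the paper simply invokes Remark \ref{few}), and you record the exact multiplicity identity $(\bigwedge^pS^d\otimes S^b,\lambda)=(\bigwedge^pS^d,\lambda')$ via Pieri, whereas the paper phrases this as ``for each occurrence \ldots\ we have an occurrence''.
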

\begin{proof} Note that Pieri's rule implies $$(\bigwedge\nolimits^{p-1}S^d\otimes S^{b+d},\lambda)=0,\mbox{ for any }\lambda\mbox{ with }|\lambda|=p+1.$$ As in Remark \ref{few}, because $b$ is finite, asymptotically all of the Schur functors appearing in $\bigwedge^pS^d$ correspond to $\lambda$ with $\lambda_p\geq b$. For each occurrence of such $\mathbb S_{\lambda}$, by Pieri's rule, we have an occurrence of $\mathbb S_{(\lambda_1,\ldots,\lambda_p,b)}$ in $\bigwedge^pS^d\otimes S^b$. Since $b>0$, this corresponds to a partition with $p+1$ parts, hence it also appears in the decomposition of $\BK_{p,0}(b;d)$ by Schur's lemma and \eqref{kp0eq}. Therefore the total multiplicity and complexity of $\BK_{p,0}(b;d)$ are bounded below by those of $\bigwedge^pS^d$, which are described in Corollary \ref{t2sw} and Remark \ref{t2sc} respectively.
\par The total multiplicity and complexity of $\BK_{p,0}(b;d)$ are bounded above by those of $\bigotimes^pS^d\otimes S^b$. By Pieri's rule, there is at most a finite number $C(p,b)$ of ways of obtaining a Young tableau of weight $(d^p,b)$ by adding $b$ boxes labeled $p+1$ to a fixed one of weight $(d^p)$, or of ways of obtaining a partition $\lambda\vdash pd+b$ by adding $b$ boxes to the corresponding Young diagram of a fixed partition $\mu$ with $|\mu|\leq p$. Therefore \begin{equation}\label{counttc}N(\bigotimes\nolimits^p S^d\otimes S^b)\in \Theta (N(\bigotimes\nolimits^p S^d))\quad\mbox{and}\quad c(\bigotimes\nolimits^p S^d\otimes S^b)\in \Theta (c(\bigotimes\nolimits^p S^d)).\end{equation}\end{proof}

\begin{rmk}We have used the assumption $b\geq 1$ to say that adding $b$ boxes on the $(p+1)$-st row of a Young diagram with $p$ non-empty rows produces a digram with $p+1$ rows. The assumption is also necessary for the theorem because, as in the introduction, $\BK_{p,0}(0;d)=0$.\end{rmk}

\begin{rmk} Except for the case of the total multiplicity when $b=1$, which we will explain in the next section, we do not know if the sequences $N(\BK_{p,0}(b;d))/d^{p\choose2}$ and $c(\BK_{p,0}(b;d))/d^{p-1}$ have limits when $b\geq1$.\end{rmk}

\begin{lem}\label{remtctb} One can improve \eqref{counttc} to 
$$N(\bigotimes\nolimits^pS^d\otimes S^b)\sim{{b+p}\choose{p}}\cdot N(\bigotimes\nolimits^pS^d)\quad\mbox{and}\quad
c(\bigotimes\nolimits^pS^d\otimes S^b)\sim (b+1)\cdot c(\bigotimes\nolimits^pS^d).$$\end{lem}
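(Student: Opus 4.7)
The plan is to apply Pieri's rule to $\bigotimes^p S^d \otimes S^b$ and then use the convex-geometric framework of Section 1 to estimate both asymptotics.

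For the total multiplicity, Pieri's rule yields
$$N(\bigotimes\nolimits^p S^d \otimes S^b) \;=\; \sum_\lambda K_{\lambda,(d^p)}\cdot h(\lambda,b),$$
where $h(\lambda,b)$ counts horizontal strips of size $b$ that extend $\lambda$. Setting $\lambda_{p+1}:=0$, when $\lambda$ has $p$ rows and every row-gap satisfies $\lambda_i-\lambda_{i+1}\geq b$, each tuple $(a_1,\ldots,a_{p+1})$ of nonnegative integers summing to $b$ produces a valid strip, so $h(\lambda,b)=\binom{p+b}{p}$. The ``bad'' locus where some $\lambda_i-\lambda_{i+1}<b$ is cut out by finitely many linear inequalities and so meets $\mathcal Y_d$ in $O(d^{\binom{p}{2}-1})$ lattice points, by the same lattice-slicing argument that underlies Remark \ref{few}. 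Combining with the uniform bound $h(\lambda,b)\leq\binom{p+b}{p}$ gives $N(\bigotimes^p S^d\otimes S^b)=\binom{p+b}{p}N(\bigotimes^pS^d)+O(d^{\binom{p}{2}-1})$, which is the first asymptotic.

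For the complexity, parameterize a partition $\mu\vdash pd+b$ with at most $p+1$ rows by a pair $(\nu,j)$, where $j:=\mu_{p+1}\in\{0,\ldots,b\}$ and $\nu:=(\mu_1,\ldots,\mu_p)$ is a partition of $pd+b-j$ with at most $p$ rows and $\nu_p\geq j$. By Pieri's rule, $\BS_\mu$ appears in $\bigotimes^p S^d\otimes S^b$ iff some $\lambda\vdash pd$ with at most $p$ rows admits $\mu/\lambda$ as a horizontal strip of size $b$; placing all $b-j$ ``upper'' boxes into the first row realizes this as soon as $\nu_1-\nu_2\geq b-j$, which is true asymptotically. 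For each fixed $j$, the set of admissible $\nu$ is in bijection with lattice points in a polytope obtained from $Y$ by a bounded translation together with the extra inequality $\nu_p\geq j$, so its cardinality is asymptotic to $c(\bigotimes^p S^d)$. Summing over the $b+1$ values of $j$ produces the asserted $(b+1)\cdot c(\bigotimes^p S^d)$.

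The only real subtlety is the accounting of boundary contributions: for the multiplicity statement, that the ``bad'' semistandard tableaux contribute a vanishing proportion of the sum; for the complexity statement, that the shift $pd\leadsto pd+b-j$ and the side condition $\nu_p\geq j$ only perturb the leading term. Both reduce to the rational-polytope volume estimates already used to establish Theorem \ref{tensorsymthm}, uniformly in the fixed parameter $b$, and should go through routinely once the parameterizations above are in place.
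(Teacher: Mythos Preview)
Your proposal is correct and follows essentially the same approach as the paper: apply Pieri's rule, identify a ``generic'' region where the count is exactly $\binom{b+p}{p}$ (respectively $b+1$) times that for $\bigotimes^pS^d$, and use the convex-geometric framework of Remark~\ref{few} to dismiss the boundary contributions. The only cosmetic difference is in the complexity argument: the paper parameterizes the target $\mu$'s by pairs $(\lambda,j)$ with $\lambda\vdash pd$ via $\lambda[j]:=(\lambda_1+b-j,\lambda_2,\ldots,\lambda_p,j)$, whereas you parameterize them by $(\nu,j)$ with $\nu=(\mu_1,\ldots,\mu_p)\vdash pd+b-j$; these are related by $\nu_1=\lambda_1+(b-j)$, $\nu_i=\lambda_i$ for $i\geq2$, so the two descriptions are equivalent up to a bounded shift.
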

\begin{proof}
By \S 0 Fact 4, the total multiplicity of $\bigotimes^pS^d\otimes S^b$ counts Young tableaux of weight $(d^p,b)$. These are partitioned according to their truncation to tableaux of weight $(d^p)$, by forgetting the $b$ boxes labeled $p+1$. Conversely, from any Young tableau $T$ of weight $(d^p)$ we obtain potential tableaux of weight $(d^p,b)$ by 
arbitrarily placing a total number of $b$ boxes labeled $p+1$ at the end of the first $p+1$ rows of $T$. The potential tableaux may fail to be actual tableaux only when the shape $\lambda$ of $T$ satisfies $\lambda_i<\lambda_{i+1}+b$ for some $i\leq p$. As in Remark \ref{few}, this phenomenon is asymptotically insignificant. Consequently, $$N(\bigotimes\nolimits^pS^d\otimes S^b)\sim {{b+p}\choose{p}}\cdot N(\bigotimes\nolimits^pS^d).$$ For the complexity problem, note that from a partition $\lambda\vdash pd$ with $|\lambda|\leq p$, one obtains $b+1$ potential partitions $\lambda[j]:=(\lambda_1+(b-j),\lambda_2,\ldots,\lambda_p,j)$ with $j\in\{0,\ldots,b\}$. By Pieri's rule, the set of all such $\lambda[j]$ that are true partitions (i.e., with $\lambda_p\geq j$) is the set of partitions $\mu$ with $(\bigotimes\nolimits^pS^d\otimes S^b,\mu)>0$.  Reasoning as in Remark \ref{few}, asymptotically all $\lambda[j]$ are true partitions. 
\end{proof}

\section{$\BK_{p,1}(b;d)$ when $b>0$}

In this section we investigate the asymptotics in $d$ of the decomposition of $\BK_{p,1}(b;d)$, when $b\geq 1$. We explain why the strategy from the previous section fails, and, using a restriction argument, we determine the asymptotic orders of the logarithms of the complexity and total multiplicity of $\BK_{p,1}(b;d)$ as functions of $\log d$, when $p$ and $b>0$ are fixed, satisfying $p\geq b+1$. \par The functor $\BK_{p,1}(b;d)$ is the cohomology of the functorial Koszul-type complex:
$$\bigwedge\nolimits^{p+1}S^d\otimes S^{b}\to\bigwedge\nolimits^pS^d\otimes S^{b+d}\to\bigwedge\nolimits^{p-1}S^d\otimes S^{b+2d}.$$
Similar to Lemma \ref{remtctb}, one can show 
$$N(\bigwedge\nolimits^{p+1}S^d\otimes S^b)\sim\frac{{{b+p+1}\choose{p+1}}}{(p+1)!}\cdot N(\bigotimes\nolimits^{p+1}S^d),$$
asymptotically $\frac{{{b+p}\choose p}}{(p+1)!}\cdot N(\bigotimes\nolimits^{p+1}S^d)$ of which corresponds to partitions of length $p+1$,
$$N(\bigwedge\nolimits^{p}S^d\otimes S^{b+d})\sim\frac 1{p!}\cdot N(\bigotimes\nolimits^{p+1}S^d),$$ and because the last term contains no Schur functor corresponding to partitions of length $p+1$,
$$N(\bigwedge\nolimits^{p-1}S^d\otimes S^{b+2d})\in o(N(\bigotimes\nolimits^{p+1}S^d)).$$
When $b>0$, the sum of multiplicities corresponding to partitions of length $p+1$ of the leftmost term is asymptotically at least as big as that of the central term. Hence we cannot apply the strategy of the previous section. In fact, with the help of a result of Raicu, when $b=1$ we obtain:  

\begin{prop}If $p\geq1$, then as $d$ grows, $N(\BK_{p,1}(1;d))\in o(N(\bigotimes^{p+1}S^d))$.\end{prop}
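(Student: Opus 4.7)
The plan is to exploit an Euler characteristic identity from the Koszul complex and then invoke a result of Raicu to pin down one of the two resulting cohomological asymptotics.

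Since $S^{1-d}=0$ for $d\geq 2$, the full Koszul complex computing the $\BK_{\bullet,\bullet}(1;d)$ sits in non-negative cohomological degrees:
\[
0 \to \bigwedge\nolimits^{p+1}S^d\otimes S^1 \to \bigwedge\nolimits^{p}S^d\otimes S^{d+1} \to \bigwedge\nolimits^{p-1}S^d\otimes S^{2d+1} \to \cdots \to S^{(p+1)d+1} \to 0.
\]
By the version of Green's vanishing recalled in the introduction, for $d$ large enough compared to $p$ the only non-vanishing cohomologies are $\BK_{p+1,0}(1;d)$ at the leftmost spot and $\BK_{p,1}(1;d)$ at the next. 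Semisimplicity of complex $GL$-representations makes the functor $(-)^U$ of unipotent invariants exact, so alternating sums of total multiplicities pass from the terms to the cohomology. This yields the identity
\[
N(\BK_{p+1,0}(1;d)) - N(\BK_{p,1}(1;d)) \;=\; \sum_{i=0}^{p+1}(-1)^{i}\,N\bigl(\bigwedge\nolimits^{p+1-i}S^d\otimes S^{id+1}\bigr).
\]

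Next I would estimate the right-hand side term by term. As already displayed in the discussion preceding the proposition, and via an adaptation of Lemma \ref{remtctb}, the $i=0$ and $i=1$ terms are asymptotic respectively to $\frac{p+2}{(p+1)!}\,N(\bigotimes^{p+1}S^d)$ and $\frac{1}{p!}\,N(\bigotimes^{p+1}S^d)$. For $i\geq 2$, a Pieri expansion bounds $N(\bigwedge^{p+1-i}S^d\otimes S^{id+1})$ by the product of $N(\bigwedge^{p+1-i}S^d)=\Theta(d^{\binom{p+1-i}{2}})$ and an $O(d^{p+1-i})$ factor counting horizontal strips of size $id+1$ across $p+2-i$ rows. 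The result is $O(d^{\binom{p+2-i}{2}})=o(d^{\binom{p+1}{2}})=o(N(\bigotimes^{p+1}S^d))$, using $\binom{k}{2}+k=\binom{k+1}{2}$ and $p+2-i\leq p$ when $i\geq 2$. Combining, the right-hand side equals $\frac{1}{(p+1)!}\,N(\bigotimes^{p+1}S^d)(1+o(1))$.

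To close the argument I would invoke Raicu's explicit Schur decomposition of $\BK_{p+1,0}(1;d)$, a refinement of the bounds established in Theorem \ref{kp0bd when d grows}, which yields the leading-order asymptotic $N(\BK_{p+1,0}(1;d)) \sim \frac{1}{(p+1)!}\,N(\bigotimes^{p+1}S^d)$. Substituting into the identity above cancels the leading term and forces $N(\BK_{p,1}(1;d))\in o(N(\bigotimes^{p+1}S^d))$, as desired. The main obstacle is this last step: one needs Raicu's results, or a direct computation, precise enough to extract not merely the order of magnitude but the exact leading coefficient $\frac{1}{(p+1)!}$ for $N(\BK_{p+1,0}(1;d))/d^{\binom{p+1}{2}}$, which is a refinement beyond the purely convex-geometric methods employed in the previous section.
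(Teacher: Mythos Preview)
Your proof is correct and follows essentially the same route as the paper: express $N(\BK_{p+1,0}(1;d)) - N(\BK_{p,1}(1;d))$ as an alternating sum of total multiplicities of Koszul terms, compute its leading asymptotic $\frac{1}{(p+1)!}N(\bigotimes^{p+1}S^d)$, and then invoke Raicu's identification of $\BK_{p+1,0}(1;d)$ with a shift of $S^{p+1}S^{d-1}$ (hence $N(\BK_{p+1,0}(1;d))=N(S^{p+1}S^{d-1})\sim\frac{1}{(p+1)!}N(\bigotimes^{p+1}S^d)$) to force the cancellation. The only cosmetic difference is that the paper truncates to the three-term complex and bounds the contribution of the third term directly, whereas you pass to the full complex and appeal to Green's vanishing for the higher cohomologies; either way the $i\geq 2$ contributions are $o(N(\bigotimes^{p+1}S^d))$.
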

\begin{proof}Consider the sequence
$$\bigwedge\nolimits^{p+1}S^d\otimes S^1\to\bigwedge\nolimits^pS^d\otimes S^{d+1}\to \bigwedge\nolimits^{p-1}S^d\otimes S^{2d+1}.$$ The first two cohomologies are by definition $\BK_{p+1,0}(1;d)$ and $\BK_{p,1}(1;d)$. The total multiplicity of the third term is asymptotically insignificant compared to the first two. Hence $$N(\BK_{p+1,0}(1;d))-N(\BK_{p,1}(1;d))\sim N(\bigwedge\nolimits^{p+1}S^d\otimes S^1)-N(\bigwedge\nolimits^pS^d\otimes S^{d+1}),$$ if we show that the latter difference is in $\Theta (N(\bigotimes^{p+1}S^d))$. By the computations above,
$$N(\bigwedge\nolimits^{p+1}S^d\otimes S^1)-N(\bigwedge\nolimits^pS^d\otimes S^{d+1})\sim\frac1{(p+1)!}\cdotp N(\bigotimes\nolimits^{p+1}S^d).$$
By \cite[Thm. 6.4]{Ra}, the decomposition of $\BK_{p+1,0}(1;d)$ is obtained from the decomposition of $S^{p+1}S^{d-1}$ by replacing Schur subfunctors $\mathbb S_{\lambda}$ of the latter with $\mathbb S_{\lambda+(1^{p+2})}$.\footnote{By Pieri's rule and Lemma \ref{Newell}, this is the same as stating that $(\BK_{p+1,0}(1;d),\lambda)=0$ if $|\lambda|<p+2$. For this, as in \S 0 Fact 5, it is enough to check that $K_{p+1,0}(\C^{p+1},1;d)=0$, which follows from \cite[Prop. 5.1]{EL}.} In particular, $$N(\BK_{p+1,0}(1;d))=N(S^{p+1}S^{d-1})\sim \frac1{(p+1)!}\cdotp N(\bigotimes\nolimits^{p+1}S^d).$$
But this is the same approximation as for $N(\BK_{p+1,0}(1;d))-N(\BK_{p,1}(1;d))$.
\end{proof}

Even if the Schur decomposition of $\BK_{p,1}(b;d)$ when $b\geq 1$ may not be as rich as that of $\bigotimes^{p+1}S^d\otimes S^b$, we are able to give a weaker asymptotic description in Theorem \ref{kppositiveb}, when $p\geq b+1$. We use a restriction argument suggested by R. Lazarsfeld. The Schur subfunctors of $\BK_{p,q}(b;d)$ corresponding to partitions of length at most $n$ are captured by the $GL_{n}$-decomposition of $K_{p,q}(\C^{n},b;d):=\BK_{p,q}(b;d)(\C^{n}).$ By \cite[Prop. 3.2]{EL}, 
\begin{equation}\label{greenk}\mbox{If }q>0,\mbox{ then}\quad K_{p,q}(\C^n,b;d) = H^1\left(\bigwedge\nolimits^{p+1}M_d \otimes \O_{\P^{n-1}}\left(b+(q-1)d\right)\right),\mbox{ where}\end{equation}
$$M_d:=\ker(S^d(\C^n)\otimes\mathcal O_{\P^{n-1}}\to\mathcal O_{\P^{n-1}}(d)).$$
Consider a splitting $\C^n=\C^{n-1}\oplus\C$, and the $GL_{n-1}$-equivariant short exact sequence:
$$ \bigwedge\nolimits^{p+1}M_d \otimes \O_{\P^{n-1}}(b+(q-1)d) \hookrightarrow \bigwedge\nolimits^{p+1}M_d \otimes \O_{\P^{n-1}}(b+1+(q-1)d) \twoheadrightarrow \bigwedge\nolimits^{p+1}M_d \otimes \O_{\P^{n-2}}(b+1+(q-1)d).$$
Then from the long sequence in cohomology we extract the $GL_{n-1}$-equivariant complex:
$$H^0(\bigwedge\nolimits^{p+1}M_d \otimes \O_{\P^{n-1}}(b+1+(q-1)d)) \ra $$ \begin{equation}\label{cohs}\to H^0(\bigwedge\nolimits^{p+1}M_d \otimes \O_{\P^{n-2}}(b+1+(q-1)d)) \ra H^1(\bigwedge\nolimits^{p+1}M_d \otimes \O_{\P^{n-1}}(b+(q-1)d)).\end{equation}
Assume $n\geq2$ is chosen such that \begin{equation}\label{greenn}p+1 \geq h^0(\O_{\P^{n-1}}(b+1+(q-1)d)).\end{equation} By \cite[Thm. 3.a.1]{G}, the leftmost term of \eqref{cohs} is zero, therefore the rightmost map is an inclusion of $GL_{n-1}$-representations. Next, we study the $GL_{n-1}$-representation $$H^0(\bigwedge\nolimits^{p+1}M_d \otimes \O_{\P^{n-2}}(b+1+(q-1)d)).$$
By restricting the map defining $M_d$, and by comparing with the corresponding map for $\P^{n-2}$,
\begin{equation}\label{isorest}M_d\otimes\mathcal O_{\P^{n-2}}\simeq M'_d\oplus(\bigoplus_{j \leq d-1}S^j\C^{n-1}\otimes\mathcal O_{\P^{n-2}}),\end{equation} where $M'_d=\ker(S^d(\C^{n-1})\otimes\mathcal O_{\P^{n-2}}\to\mathcal O_{\P^{n-2}}(d)).$ The isomorphism \eqref{isorest} is $GL_{n-1}$-equivariant. Then 
$$H^0(\bigwedge\nolimits^{p+1}M_{d} \otimes \O_{\P^{n-2}}(b+1+(q-1)d)) =$$ 
$$=H^0\left(\bigoplus_{i = 0}^{p+1} \bigwedge\nolimits^iM'_{d} \otimes (\bigwedge\nolimits^{p+1-i} (\bigoplus_{j \leq d-1}S^j(\C^{n-1})))\otimes \mathcal O_{\P^{n-2}}(b+1+(q-1)d)\right)$$
$$=\bigoplus_{i = 0}^{p+1}\left[H^0\left(\bigwedge\nolimits^iM'_{d} \otimes \O_{\P^{n-2}}(b+1+(q-1)d)\right) \otimes \bigwedge\nolimits^{p+1-i}(\bigoplus_{j \leq d-1}S^j(\C^{n-1}))\right].$$ Keeping only the term corresponding to $i=0$, we have by \eqref{greenk}, \eqref{cohs}, \eqref{greenn}, and \cite[Thm. 3.a.1]{G} an inclusion of $GL_{n-1}$-representations
\begin{equation}\label{godzila}S^{b+1+(q-1)d}(\C^{n-1})\otimes\bigwedge\nolimits^{p+1}(\bigoplus_{j\leq d-1}S^j(\C^{n-1}))\hookrightarrow K_{p,q}(\C^n,b;d).\end{equation} 
In particular, when $q=1$, we obtain an inclusion of $GL_{n-1}$-representations
\begin{equation}\label{lotss}\bigoplus_{0\leq e_p<e_{p-1}<\ldots< e_0\leq d-1}(S^{e_0}(\C^{n-1})\otimes\ldots\otimes S^{e_p}(\C^{n-1})\otimes S^{b+1}(\C^{n-1}))\hookrightarrow K_{p,1}(\C^n,b;d).\end{equation}
\vskip.5cm
The following lemma describes some of the partitions $\lambda$ such that $\mathbb S_{\lambda}$ appears in the decomposition of the LHS of \eqref{lotss}.

\begin{lem}\label{l1} Let $n\geq 2$ and $b\geq0$. Fix $\lambda$ a partition with $|\lambda|= n-1\leq p+2$, and $\lambda_{n-1}>b+1$. We assume that $\lambda$ satisfies $$\lambda\vdash L_0+(b+1),\quad\mbox{with}\quad L_0\geq \frac{p(p+1)}2.$$ For all $0\leq i\leq p$, let \begin{equation}\label{dcond}\left\{\begin{array}{c}e_i:=\lceil\frac{L_i}{p+2-i}+\frac{p+1-i}2\rceil \\ L_{i+1}:=L_i-e_i\end{array}\right.\end{equation}
Then $e_0>e_1>\ldots>e_p\geq 0$ and $L_{p+1}=0$. Assume furthermore that the following conditions hold:
\begin{equation}\label{tcond}\left\{\begin{array}{c}\lambda'_1\geq e_0\\ \lambda'_1+\lambda'_2\geq e_0+e_1\\ \vdots \\ \lambda'_1+\lambda'_2+\ldots+\lambda'_{n-2}\geq e_0+e_1+\ldots+e_{n-3},\end{array}\right.\end{equation}
where $\lambda':=R(b+1,\lambda)$ is the removal of the last $b+1$ visible boxes in $\lambda$. The visible boxes of $\lambda$ are by definition the $\lambda_1$ boxes of the Young diagram of $\lambda$ that have no box directly below them. We order them increasingly from left to right, and then up to down. Then $\mathbb S_{\lambda}$ appears in the decomposition of $$S^{b+1}\otimes\bigotimes\nolimits_{i=0}^pS^{e_i}.$$  
If $e_0\leq d-1$, then $\mathbb S_{\lambda}(\C^{n-1})$ appears in the decomposition of the left term of \eqref{lotss}.
\end{lem}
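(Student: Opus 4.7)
The proof splits into three steps. First I would verify the arithmetic properties of the $e_i$. Second, using Fact~4 from $\S 1$ and the Kostka-number interpretation, I would show that $\mathbb{S}_{\lambda'}$ appears in $\bigotimes_{i=0}^p S^{e_i}$ via dominance. Third, Pieri's rule applied to the horizontal strip $\lambda/\lambda'$ gives $\mathbb{S}_\lambda$ as a summand of $\mathbb{S}_{\lambda'}\otimes S^{b+1}$. Combining the last two with the assumption $e_0\leq d-1$ will yield the embedding into the left-hand side of \eqref{lotss}. The main obstacle is the arithmetic verification in step one; the key conceptual content is the translation in step two of the hypothesis \eqref{tcond} into dominance.

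\medskip

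For step one, I would set $f_i := L_i/(p+2-i)+(p+1-i)/2$, so that $e_i=\lceil f_i\rceil$. A direct substitution of $L_{i+1}=L_i-e_i$ into the formula for $f_{i+1}$ shows that, up to the corrections induced by the ceiling, $f_{i+1}=f_i-1$. An induction on $i$, starting from $L_0\geq p(p+1)/2$ and tracking how the rounding affects the next $L_i$, then shows that the $e_i$ strictly decrease to $e_p\geq 0$ and that the recursion terminates with $L_{p+1}=0$.

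\medskip

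For step two, the multiplicity of $\mathbb{S}_{\lambda'}$ in $\bigotimes_{i=0}^p S^{e_i}$ equals, by Fact~4, the Kostka number $K_{\lambda',(e_0,\ldots,e_p)}$, which is positive if and only if $\lambda'$ dominates $(e_0,\ldots,e_p)$ in dominance order of partitions of $L_0$ (the sequence already being sorted, since $e_0>\cdots>e_p$). Dominance reduces to the inequalities $\lambda'_1+\cdots+\lambda'_k\geq e_0+\cdots+e_{k-1}$ for $k=1,\ldots,n-2$, which is exactly \eqref{tcond}; the remaining cases $k\geq n-1$ are automatic from $|\lambda|=n-1\leq p+2$ and the total-sum equality $\sum \lambda'_j=L_0=\sum e_i$. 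For step three, $\lambda'=R(b+1,\lambda)$ removes $b+1$ visible boxes, which by definition lie in distinct columns; hence $\lambda/\lambda'$ is a horizontal strip of size $b+1$, and Pieri's rule gives $\mathbb{S}_\lambda$ as a summand of $\mathbb{S}_{\lambda'}\otimes S^{b+1}$. Combining steps two and three places $\mathbb{S}_\lambda$ in $S^{b+1}\otimes\bigotimes_{i=0}^p S^{e_i}$; and the assumption $d-1\geq e_0>\cdots>e_p\geq 0$ identifies $(e_0,\ldots,e_p)$ as one of the tuples indexing the direct sum on the LHS of \eqref{lotss}, so $\mathbb{S}_\lambda(\mathbb{C}^{n-1})$ appears as one of its summands.
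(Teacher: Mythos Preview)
Your three-step approach is correct and is precisely the ``algorithmic proof'' that the paper alludes to; note that the paper itself does not give a proof but states only that ``the statement of the lemma hints to its algorithmic proof that we leave as an exercise.'' Your identification of the dominance condition as the content of \eqref{tcond}, together with the observation that the removed visible boxes lie in distinct columns so that $\lambda/\lambda'$ is a horizontal $(b+1)$-strip, is exactly the intended argument.

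One small caution on step one: your computation that, absent the ceiling, $f_{i+1}=f_i-1$ is correct, and with the ceiling one gets $f_{i+1}=f_i-1-\delta_i/(p+1-i)$ for some $\delta_i\in[0,1)$, which immediately gives $e_{i+1}\leq e_i-1$. The termination claim $L_{p+1}=0$ and $e_p\geq 0$ requires a bit more care in tracking the accumulated rounding; you should check these explicitly rather than leave them to the phrase ``tracking how the rounding affects the next $L_i$.'' Also, for step two, note that $|\lambda'|=|\lambda|=n-1$ (since $\lambda_{n-1}>b+1$ prevents the last row from emptying), so to place $\mathbb S_{\lambda'}$ inside $\bigotimes_{i=0}^p S^{e_i}$ you need $n-1\leq p+1$; make sure this follows from the hypotheses as stated (or from how the lemma is actually invoked in Theorem~\ref{kppositiveb}).
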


\noindent The statement of the lemma hints to its algorithmic proof that we leave as an exercise. We are ready to show that the Schur decomposition of $\BK_{p,1}(b;d)$ is asymptotically rich.
\begin{thm}\label{kppositiveb}Fix $p\geq1$ and $b\geq1$. Assume that $p\geq b+1$. Then as $d$ goes to infinity, $$\log c(\BK_{p,1}(b;d)) \in \Theta(\log d), \qquad \log N(\BK_{p,1}(b;d)) \in \Theta(\log d).$$\end{thm}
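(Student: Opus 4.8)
The plan is to bound $c(\BK_{p,1}(b;d))$ and $N(\BK_{p,1}(b;d))$ both above and below by quantities that are singly exponential in a polynomial of bounded degree in $\log d$ — equivalently, polynomial in $d$ of fixed degree — so that taking logarithms produces $\Theta(\log d)$. The upper bounds are the easy half: $\BK_{p,1}(b;d)$ is a subquotient of $\bigwedge^pS^d\otimes S^{b+d}$, which injects into $\bigotimes^{p+1}S^d\otimes S^b$ (splitting off one $S^d$ factor and using $S^{b+d}\hookrightarrow S^d\otimes S^b$), so by Theorem~\ref{tensorsymthm} and Lemma~\ref{remtctb} we get $N(\BK_{p,1}(b;d))\leq C\cdot d^{\binom{p+1}{2}}$ and $c(\BK_{p,1}(b;d))\leq C'\cdot d^p$. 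Hence $\log N$ and $\log c$ are both $O(\log d)$.

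The lower bounds come from the restriction machinery developed just above, namely the inclusion \eqref{lotss}. Choose $n$ minimal so that \eqref{greenn} holds for $q=1$, i.e. $p+1\geq h^0(\O_{\P^{n-1}}(b+1)) = \binom{n-1+b+1}{b+1}$; since $p\geq b+1$ such an $n\geq 2$ exists, and $n$ depends only on $p$ and $b$, not on $d$. With this $n$ fixed, \eqref{lotss} exhibits inside $K_{p,1}(\C^{n},b;d)$ a direct sum over all strictly increasing sequences $0\leq e_p<\dots<e_0\leq d-1$ of $S^{e_0}(\C^{n-1})\otimes\dots\otimes S^{e_p}(\C^{n-1})\otimes S^{b+1}(\C^{n-1})$. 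The number of such sequences is $\binom{d}{p+1}=\Theta(d^{p+1})$, and each summand is a nonzero representation, so already $\dim K_{p,1}(\C^{n},b;d)$ — hence $N(\BK_{p,1}(b;d))$ — is at least $\Theta(d^{p+1})$ (one can do better by counting multiplicities inside each tensor product, but a polynomial lower bound of positive degree is all that is needed). Thus $\log N(\BK_{p,1}(b;d))\geq c\log d$ for large $d$.

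For the complexity lower bound, I would invoke Lemma~\ref{l1}, which produces for suitable partitions $\lambda$ with $|\lambda|=n-1$ an explicit Schur functor $\mathbb{S}_\lambda$ occurring in the left side of \eqref{lotss}, provided $e_0(\lambda)\leq d-1$ where $e_0$ is the quantity in \eqref{dcond}. Fixing $n$ as above and letting $L_0$ (equivalently $|\lambda|$-weight) range, the construction of \eqref{dcond} gives $e_0=\Theta(L_0)$, so the condition $e_0\leq d-1$ is met by $\Theta(d^{\,?})$ many partitions $\lambda$ (those of $\leq n-1$ parts with $L_0=O(d)$, subject to the inequalities \eqref{tcond}); concretely the set of admissible $\lambda$ contains all partitions in a fixed-dimensional cone scaled by $d$, hence has cardinality $\Theta(d^{n-2})$ for some positive exponent, giving $c(\BK_{p,1}(b;d))\geq c'' d^{\,\text{const}}$ and $\log c(\BK_{p,1}(b;d))\geq c\log d$. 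Combining with the upper bounds finishes the proof.

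The main obstacle is verifying that the conditions of Lemma~\ref{l1} — in particular the tableau-type inequalities \eqref{tcond} and the bound $L_0\geq p(p+1)/2$ — are satisfiable for a family of partitions whose cardinality grows like a positive power of $d$, rather than for only finitely many $\lambda$; this requires checking that the cone cut out by \eqref{tcond} in the space of partitions of bounded length is full-dimensional (so that its dilates contain $\Theta(d^{n-2})$ lattice points) and that for those $\lambda$ the associated $e_0$ is $O(d)$. Once that combinatorial point is settled, everything else is bookkeeping with Pieri's rule and the dimension estimates already in hand.
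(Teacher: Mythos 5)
Your upper bounds are fine, but the lower bounds contain a genuine gap. The inclusion \eqref{lotss} is only an inclusion of $GL_{n-1}$-representations: it comes from restricting to a hyperplane $\P^{n-2}\subset\P^{n-1}$ after fixing a splitting $\C^n=\C^{n-1}\oplus\C$, which destroys $GL_n$-equivariance. The Schur subfunctors of $\BK_{p,1}(b;d)$ of length at most $n$ correspond to irreducible $GL_n$-constituents of $K_{p,1}(\C^n,b;d)$, and a single such constituent $\mathbb S_\lambda(\C^n)$ with $\lambda\vdash (p+1)d+b+1$ restricts, by the branching rule, to as many as $\prod_i(\lambda_i-\lambda_{i+1}+1)=O(d^{n-1})$ distinct $GL_{n-1}$-types. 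So exhibiting $\Theta(d^{n-2})$ distinct $GL_{n-1}$-types via Lemma \ref{l1} proves nothing about $c(\BK_{p,1}(b;d))$: they could all branch out of a bounded number of $GL_n$-irreducibles. The paper's proof is organized around closing exactly this hole: it keeps only partitions with a ``twin pattern'' ($\lambda_{2i-1}=\lambda_{2i}$, etc.), so that any two of the selected $GL_{n-1}$-diagrams differ in some column by at least two boxes and therefore cannot interlace a common $GL_n$-highest weight; this thins the count to $\Theta(d^{[(n-3)/2]})$, still a positive power of $d$ for $n\geq5$, with small $n$ treated separately. Without some such separation device your complexity lower bound does not go through, and hence neither does your route to the $N$ bound.

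Two smaller problems. First, you take $n$ \emph{minimal} with \eqref{greenn}; since \eqref{greenn} holds for all $n$ up to a threshold and fails beyond it, minimal means $n=2$, and then \eqref{lotss} produces $GL_1$-representations --- a single character type --- so even your own count $\Theta(d^{n-2})$ collapses to $\Theta(1)$. You need $n$ \emph{maximal}, which the paper bounds below by $\left[\sqrt[b+1]{(p+1)\cdot(b+1)!}\right]$. Second, the inference ``$\dim K_{p,1}(\C^{n},b;d)\geq\Theta(d^{p+1})$, hence $N(\BK_{p,1}(b;d))\geq\Theta(d^{p+1})$'' conflates dimension with total multiplicity: a single irreducible $GL_n$-representation of this weight can itself have dimension of order $d^{\binom{n}{2}}$. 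This one is repairable (divide by the maximal dimension of an irreducible constituent, or simply observe $N\geq c$ once the complexity bound is in place, which is how the paper concludes), but as written it is a non sequitur.
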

\begin{proof}
Choose $n$ maximal such that \eqref{greenn} is satisfied. This verifies 
$$n\geq[\sqrt[b+1]{(p+1)\cdot(b+1)!}].$$ Assume $n\geq5$. The plan of the proof of the proposition is as follows: The previous lemma constructs irreducible $GL_{n-1}$-subrepresentations of $K_{p,1}(\C^n,b;d)$. To prove the existence of many distinct types of irreducible $GL_n$-subrepresentations, by the branching rule \cite[Ex. 6.12]{FH}, it is enough to pick a large subset of the $GL_{n-1}$-representations that correspond to Young diagrams that differ pairwise in at least one column by at  least two boxes. We force this by asking that the corresponding partitions satisfy $\lambda_{2i-1}=\lambda_{2i}$ for all $i\leq (n-1)/2$, and $\lambda_{n-1}=\lambda_{n-2}$. Geometrically, we ask that the Young diagram of $\lambda$ sits in a tiling of the plane by $1\times 2$-sized boxes, instead of the standard $1\times 1$ tiling, with the possible exception of using $1\times 3$ boxes for the last three rows when $n-1$ is odd. We say that $\lambda$ has a \textit{twin pattern of length} $n-1$. The Young diagrams of distinct partitions with twin patterns of length $n-1$ (or of lengths of the same parity) automatically differ in at least one column by at least two $1\times 1$ boxes.

\par We now construct an easy-to-count set of partitions $\lambda$ with twin patterns of length $n-1$, also satisfying the conditions of Lemma \ref{l1}. We first ask that 
\begin{equation}\label{res1}\lambda_{n-1}\geq B:=B(p,b,n):=\max\left\{b+2,\frac{\frac{p(p+1)}2+b+1}{n-1}\right\}.\end{equation}
This implies $\lambda_{n-1}>b+1$, and $L_0\geq \frac{p(p+1)}2$. We next impose the restriction 
\begin{equation}\label{res2}\lambda_1\geq\max\left\{\frac{n-3}2,1\right\}\cdot\left(\frac{L_0}{p+2}+\frac{p+3}2\right)\geq \max\left\{\frac{n-3}2,1\right\}\cdot e_0.\end{equation}
This is a manageable condition that together with $\lambda_1=\lambda_2$ implies the relations \eqref{tcond}. The last requirement of Lemma \ref{l1} is $e_0\leq d-1$. For big enough $d$, this is implied by 
\begin{equation}\label{res3}L_0\leq pd.\end{equation}
A bound on $\lambda_3$ so that any further choice for $\lambda_3\geq\lambda_5\geq\ldots\geq\lambda_{2[(n-1)/2]-1}$ satisfies via the twin pattern condition the previous restrictions is:
\begin{equation}\label{res4}B\leq\lambda_3\leq D(\lambda_1,d,p,n):=\min\left\{\lambda_2=\lambda_1,\ \frac{pd-2\lambda_1}{n-3},\ \frac{2(p-n+1)}{(n-2)(n-3)}\lambda_1-\frac{(p+1)(p+2)}{2(n-3)}\right\}.\end{equation}
In the above set, $\lambda_1$ appears because $\lambda$ is a Young diagram. The condition $\lambda_1=\lambda_2$ is part of the tiling restriction. The middle term is explained by \eqref{res3}, and an algebraic manipulation shows that $\lambda_3\leq \frac{2(p-n+1)}{(n-2)(n-3)}\lambda_1-\frac{(p+1)(p+2)}{2(n-3)}$ implies \eqref{res2}.
\par We are reduced to counting partitions $\lambda$ having twin patterns of length $n-1$, satisfying \eqref{res1} and \eqref{res4}. For each fixed $\lambda_1\mbox{ in the interval }\left[B,\frac{pd-(n-3)B}2\right],$ these are counted by the binomial coefficients: $${{[D(\lambda_1,d,p,n)]-\lceil B\rceil+\left[\frac{n-5}2\right]}\choose{\left[\frac{n-5}2\right]}}.$$
When we sum over the range of $\lambda_1$, the result is in $\Theta (d^{\left[\frac{n-3}2\right]})$. This conclusion also holds when $n<5$.  The cases $n\in\{2,3,4\}$ follow immediately from \eqref{lotss}, and the case $n=1$ is impossible, because \eqref{greenn} is satisfied for $n=2$, when $p\geq b+1$. This gives the lower bound needed for the first statement, hence also for the second. The upper bounds follow from Lemma \ref{remtctb} and from Theorem \ref{tensorsymthm}.\end{proof}

\section{Varying $p$}
\addtocontents{toc}{\protect\setcounter{tocdepth}{0}}

A first result in the direction of varying $p$ for syzygy functors is \cite[Cor. 6.2]{EL}. It proves the nonvanishing of $\BK_{p,q}(b;d)$ for fixed $q\geq1$ and sufficiently large $p$. In this section, we study the growth of the complexities of $\K_{p,0}(b;d)$ and of $\K_{p,1}(b;d)$ when we vary $p$.  

\subsection{$\K_{p,0}(b;d)$}

We evaluate the complexity of $\BK_{p,0}(b;d)$ when we fix $b>0$ and $d>2$, and let $p$ grow. It is elementary that $\BK_{p,0}(0;d)=0$, which is why we exclude this trivial case.

\begin{thm}
Fix $b > 0$ and $d>2$. Then as $p$ grows to infinity,
$$\log c(\K_{p,0}(b;d)) \in \Theta (p^{1/2}).$$\end{thm}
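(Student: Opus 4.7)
The plan is to establish matching bounds $\log c(\BK_{p,0}(b;d))\in\Theta(\sqrt p)$ as $p$ grows.

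For the upper bound, I would observe that $\BK_{p,0}(b;d)$ is a subfunctor of $\bigwedge^p S^d\otimes S^b$, whose Schur subfunctors correspond by Pieri's rule to partitions $\lambda\vdash pd+b$ with $|\lambda|\leq p+1$. Thus $c(\BK_{p,0}(b;d))$ is bounded by the total number of partitions of $pd+b$, whose logarithm is $O(\sqrt{pd+b})=O(\sqrt p)$ by Hardy--Ramanujan.

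For the lower bound, the starting observation is that the target $\bigwedge^{p-1}S^d\otimes S^{b+d}$ of the Koszul differential admits only Schur subfunctors with $|\lambda|\leq p$. Hence any Schur subfunctor of $\bigwedge^p S^d\otimes S^b$ of length exactly $p+1$ automatically lies in the kernel $\BK_{p,0}(b;d)$. I would produce such subfunctors as follows: for each $\mu\vdash pd$ with $|\mu|=p$ and $\mathbb S_\mu\subset\bigwedge^p S^d$, set
$$\lambda(\mu):=(\mu_1+b-1,\,\mu_2,\,\ldots,\,\mu_p,\,1).$$
Then $\lambda(\mu)/\mu$ is a horizontal $b$-strip (the $b-1$ boxes added to row $1$ and the single box in row $p+1$ occupy disjoint columns since $\mu_1\geq\mu_p\geq 1$), so by Pieri $\mathbb S_{\lambda(\mu)}\subset\bigwedge^p S^d\otimes S^b$, and $|\lambda(\mu)|=p+1$ places it in $\BK_{p,0}(b;d)$. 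Different $\mu$ clearly yield different $\lambda(\mu)$.

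To count the $\mu$, I would apply Newell's Lemma \ref{Newell}, which identifies $(\bigwedge^p S^d,\mu)=(S^pS^{d-1},\mu-(1^p))$ for $\mu_p\geq 1$, reducing the count to partitions $\nu\vdash p(d-1)$ with $|\nu|\leq p$ such that $\mathbb S_\nu\subset S^pS^{d-1}$. Because $d>2$ forces $d-1\geq 2$, Remark \ref{wexplicit} supplies plentiful such $\nu$: one can take $\nu=2\nu'$ when $d-1=2k$ is even or $\nu=2\nu'+(p)$ when $d-1=2k+1$ is odd, for any $\nu'\vdash pk$ with $|\nu'|\leq p$ and $k\geq 1$. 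Embedding partitions $\eta$ of $p$ via $\eta\mapsto\eta+((k-1)^p)$ (after padding $\eta$ with zeros to length $p$) produces at least as many $\nu'$ as there are partitions of $p$, so the count has logarithm $\Theta(\sqrt p)$ by Hardy--Ramanujan.

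The main obstacle is essentially combinatorial bookkeeping: verifying distinctness at each stage of the chain Remark \ref{wexplicit} $\to$ Newell $\to$ horizontal strip, handling the parity split in Remark \ref{wexplicit}, and confirming that $\lambda(\mu)$ is a genuine partition (which needs $\mu_p\geq 1$, built into the construction). The key conceptual idea---using length exactly $p+1$ to force membership in the kernel---mirrors the one in the proof of Theorem \ref{kp0bd when d grows}.
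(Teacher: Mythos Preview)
Your proposal is correct and follows essentially the same route as the paper: the upper bound via the partition count of $pd+b$, and the lower bound via the observation that length-$(p+1)$ Schur subfunctors of $\bigwedge^pS^d\otimes S^b$ lie in the kernel, produced from $\bigwedge^pS^d$ via Newell's Lemma and Remark~\ref{wexplicit}, with Hardy--Ramanujan controlling the asymptotics. The only cosmetic differences are that you spell out the explicit Pieri step $\mu\mapsto\lambda(\mu)=(\mu_1+b-1,\mu_2,\ldots,\mu_p,1)$ where the paper just says ``using Pieri's rule,'' and you further reduce the count to partitions of $p$ rather than stopping at partitions of $p\cdot[(d-1)/2]$ with at most $p$ parts as the paper does; both yield $\log\in\Theta(\sqrt p)$.
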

\begin{proof}
We argue that, on the one hand, $\log \K_{p,0}(b;d)$ is bounded below by a multiple of $(p^{1/2})$ by using Remark \ref{wexplicit}. On the other, $\log c(\bigwedge^p S^d \otimes S^b)$ is bounded above by a multiple of $(p^{1/2})$, which provides the same upper bound for the cohomology functor.

As observed in the proof of Theorem \ref{kp0bd when d grows}, any $\mathbb S_\lambda$ with $|\lambda|=p+1$ in $\bigwedge^p S^d \otimes S^b$ appears in $\BK_{p,0}(b;d)$. Using Pieri's rule and Remark \ref{wexplicit}, it can be shown that the complexity of $\BK_{p,0}(b;d)$  is at least the number of ways one can partition $p\cdot[(d-1)/2]$ with at most $p$ parts, the logarithm of which is bounded below by a multiple of $(p^{1/2})$ by \cite[p86]{H}. The complexity of $\bigwedge^pS^d \otimes S^b$ is bounded above by the number of partitions of $pd+b$. The logarithm of these is $\Theta(p^{1/2}$) by \cite[p86]{H}.\end{proof}

\subsection{$\K_{p,1}(b;d)$}

In this subsection we look for lower bounds on the complexity of $\BK_{p,1}(b;d)$ when we increase $p$ and $d$. We use the same restriction argument from the previous section, but this time we count partitions $\lambda$ of length $n-1$ that have \textit{almost triplet pattern}, i.e., the second and third parts of $\overline{\lambda}:=\lambda-(1^{|\lambda|})$ are equal, the next three are equal, and so on. We ask that $|\overline{\lambda}|$ is a multiple of 3. The \textit{mold} of a partition $\lambda$ with almost triplet pattern is the partition obtained from $\overline{\lambda}$ by making its first part equal to the second (and third). One sees that irreducible $GL_{n-1}$-representations corresponding to partitions with different molds cannot branch out from the same irreducible $GL_n$-representation after restriction to $GL_{n-1}$.

\begin{lem}
Fix $b\geq0$. Let $n:=n(p)\leq p+1$ be such that $\lim_{p\to\infty}n(p)=\infty$. Then as $p$ grows, for $d\geq3\cdot\left[\frac{p+1}{n-3}\right]+3$, the number of molds of partitions $\lambda$ of length $n-1$ with almost triplet pattern, such that $\mathbb S_{\lambda}\C^{n-1}$ is a $GL_{n-1}$-subrepresentation of 
\begin{equation}\label{big direct sum}
\bigoplus_{ a_0 + ... + a_{d-1} = p+1}  \left(\bigotimes\nolimits^{d-1}_{i = 0} \bigwedge\nolimits^{a_i}S^i\C^{n-1} \right) \otimes S^{b+1}\C^{n-1},
\end{equation}
has its logarithm bounded below by a multiple of $(n^{1/2})$.
\end{lem}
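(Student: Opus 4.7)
The plan is to exhibit, for each mold in a sufficiently large family, a partition $\lambda$ with that mold such that $\mathbb S_{\lambda}\mathbb C^{n-1}$ embeds as a $GL_{n-1}$-subrepresentation of the displayed direct sum, and then to bound the size of the family by Hardy--Ramanujan-type asymptotics for partition counting.

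Set $K+1=\lfloor (n-1)/3\rfloor$ and parameterize candidate molds by strictly decreasing tuples of even integers $c_0>c_1>\cdots>c_K\geq b+1$, with each $c_j$ in a range $[b+1,b+1+M_0]$ for some $M_0$ polynomial in $n$. Restricting to even $c_j$ ensures, via Newell's lemma (Lemma \ref{Newell}) combined with Remark \ref{wexplicit} applied to $\lambda=(c_j/2,c_j/2,c_j/2)$, that each factor $\bigwedge\nolimits^3 S^{c_j+1}\mathbb C^{n-1}$ contains the Schur subfunctor $\mathbb S_{(c_j+1,c_j+1,c_j+1)}$. Strict decrease of the $c_j$'s implies, via iterated Littlewood--Richardson (the relevant ``rectangle concatenation'' coefficients are easily seen to be $1$), that
\[
\bigotimes_{j=0}^{K}\bigwedge\nolimits^{3}S^{c_j+1}\ \supseteq\ \mathbb S_{((c_0+1)^{3},(c_1+1)^{3},\ldots,(c_K+1)^{3})}.
\]
Setting $a_{c_j+1}=3$ for $j=0,\ldots,K$ consumes $3(K+1)$ of the $p+1$ wedge slots. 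The remaining $p+1-3(K+1)$ slots are placed at indices $M\in[0,d-1]\setminus\{c_0+1,\ldots,c_K+1\}$, and together with the $S^{b+1}$ factor are routed via iterated Pieri/LR to augment only the first row (or first triplet) of the current partition; the hypothesis $d\geq 3\lfloor(p+1)/(n-3)\rfloor+3$ guarantees both that $c_0+1\leq d-1$ can be arranged by bounding $M_0$ and that enough unoccupied indices are available to accommodate the excess slots. This produces $\mathbb S_{\lambda}$ with
\[
\lambda=(\lambda_1,\,c_0+1,\,c_0+1,\,c_1+1,\,c_1+1,\,c_1+1,\,\ldots,\,c_K+1,\,c_K+1,\,c_K+1),
\]
possibly up to a constant shift of the first triplet which still yields a mold of the same combinatorial type as the tuple.

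For the counting, after subtracting $b+1$ from each $c_j$ and taking into account the evenness restriction, the number of valid strictly decreasing tuples with $\sum c_j$ in a window of width $\Theta(n)$ is, up to multiplicative constants, the number of strict partitions of some $N\in\Theta(n)$ into exactly $K+1\sim n/3$ parts. By the Hardy--Ramanujan-type asymptotic for strict partitions cited in \cite[p86]{H}, the logarithm of this count is bounded below by a positive multiple of $\sqrt{N}=\Theta(\sqrt{n})$, provided $K+1\geq c\sqrt{N}$ for a fixed $c>0$; this holds once $n$ is sufficiently large. Since different input tuples give rise to different realized molds, the number of molds with the required realizability property is $e^{\Omega(\sqrt{n})}$.

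The main obstacle is routing the $p+1-3(K+1)$ excess wedge slots without disturbing the triplet rows. When the excess is small (e.g.\ $p-n+2\leq 1$) the slot is a single $S^M=\bigwedge^{1}S^M$ factor and an elementary Pieri step deposits all added boxes in row 1; for larger excesses one must simultaneously use several $\bigwedge^{a_i}S^{i}$ factors and select Schur subfunctors within each (again via Newell and the plethysm results above) whose Young diagrams fit into the top three rows, which are then consolidated onto row 1 by further LR. Handling this uniformly across the admissible regime of $(p,n,d)$ is the technical heart of the proof, but the resulting minor constant shifts of the first triplet do not affect the asymptotic $e^{\Omega(\sqrt{n})}$ lower bound on the mold count.
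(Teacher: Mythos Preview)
Your construction has a genuine gap that makes it fail under the stated hypothesis on $d$. By realizing the triplet blocks via $K+1=\lfloor(n-1)/3\rfloor$ separate factors $\bigwedge^{3}S^{c_j+1}$, you are forced to choose $K+1$ \emph{distinct} indices $c_j+1\in\{0,\ldots,d-1\}$ (since each index $i$ carries a single exponent $a_i$). This already requires $d\gtrsim n/3$. But the hypothesis $d\geq 3\lfloor(p+1)/(n-3)\rfloor+3$ allows $d$ to be bounded while $n\to\infty$: for instance when $n=p+1$ one may take $d=6$, and then there is no room for $\sim p/3$ distinct indices. The paper's argument avoids this by going to the opposite extreme: it concentrates the slots, taking a \emph{single} large wedge $\bigwedge^{n-1}S^{d-r}$ (and, for the sharper bound, a small number $m\approx(p+1)/(n-1)$ of wedges $\bigwedge^{n_i}S^{d_i}$ with $n_i$ close to $n-1$). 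All of the triplet-pattern richness is then extracted from the plethysm of that one wedge via Remark~\ref{wexplicit}: for each partition $\mu$ of $\lfloor(n-1)(d-r-1)/6\rfloor$ with at most $\lfloor(n-1)/3\rfloor$ parts one gets a distinct mold, and there are at least as many such $\mu$ as partitions of $\lfloor(n-1)/3\rfloor$, whose logarithm is $\Theta(n^{1/2})$. The remaining $p-n+2$ slots are single symmetric powers $S^{d-r-1},S^{d-r-2},\ldots$, routed to the first row by Pieri; this needs only $\approx(p+1)/(n-1)$ indices, which is what the bound on $d$ provides.

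Two further issues. First, your counting paragraph is internally inconsistent: a strictly decreasing $(K+1)$-tuple of nonnegative integers has sum at least $\binom{K+1}{2}\sim n^2/18$, so there are no such tuples with $\sum c_j$ of size $\Theta(n)$; the invocation of ``strict partitions of some $N\in\Theta(n)$ into exactly $K+1\sim n/3$ parts'' is therefore vacuous. (If the construction did work, the honest count would be a binomial coefficient, not a partition number.) Second, your $\lambda$ has length $3(K+1)=3\lfloor(n-1)/3\rfloor$, which equals $n-1$ only when $3\mid n-1$; the lemma asks for length exactly $n-1$. This last point is fixable by padding, but the index-count obstruction above is not: you need to replace the many $\bigwedge^3$'s by one (or few) large $\bigwedge^{n_i}$'s and draw the molds from its internal plethysm.
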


\begin{proof}
We prove a weaker form of the result by asking that $d\geq p+2$. Fix $r\in\{1,2,3\}$ such that $d-r\equiv1\mod 3$. The representation \eqref{big direct sum} contains the nonzero subrepresentation:
\begin{equation}\label{string}\left(\bigwedge\nolimits^{n-1}S^{d-r}\otimes(S^{d-r-1}\otimes S^{d-r-2}\otimes\ldots\otimes S^{d-r-1-(p-n+1)})\otimes S^{b+1}\right)(\C^{n-1}).\end{equation}
By Remark \ref{wexplicit}, for any partition $\mu\vdash\left[\frac{(n-1)(d-r-1)}6\right]$, with $|\mu|\leq l:=\left[\frac{n-1}3\right]$, the partition 
$$\lambda'(\mu):=(1^{n-1})+2(\mu_1,\ \mu_1,\ \mu_1,\ldots,\mu_l,\ \mu_l,\ \mu_l)+\epsilon(d)\cdot(n-1)$$ is such that $\mathbb S_{\lambda'}\C^{n-1}$ is a subrepresentation of $\bigwedge^{n-1}S^{d-r}\C^{n-1}$, where $\epsilon(d)$ is 0 if $d-r-1$ is even, and 1 if $d-r-1$ is odd. By Pieri's rule, $$\lambda(\mu):=\lambda'(\mu)+((\sum_{k=0}^{p-n+1}(d-r-1-k))+b+1)$$ produces a subrepresentation of \eqref{string}. By construction, $\lambda(\mu)$ has almost triplet pattern, and length $n-1$. If $\mu$ and $\mu'$ are different, then $\lambda(\mu)$ and $\lambda(\mu')$ have different molds. At least when $p$ is large enough, the number of $\mu$'s is bigger than the number of partitions of $[(n-1)/3]$, the logarithm of which is in $O (n^{1/2})$ by \cite[p86]{H}.
\par When using the bound $d\geq3\cdot\left[\frac{p+1}{n-4}\right]+3$, the result is proved similarly by considering a subrepresentation of \eqref{big direct sum} of form
$\left(\bigwedge^{n_1}S^{d_1}\otimes\ldots\otimes\bigwedge^{n_m}S^{d_m}\otimes S^{b+1}\right)(\C^{n-1})$, with $m$ as small as possible, such that $\sum_{i=1}^m n_i=p+1$, with $n_i<n$ satisfying some conditions modulo 3, and with $d_1>d_2>\ldots>d_m$ as large as possible, all congruent to 1 modulo 3 and smaller than $d-1$.\end{proof}

As a corollary, we obtain:
\begin{prop}\label{increase d with p}
Fix $b\geq0$. As $p$ goes to infinity, for $n:=\left[\sqrt[b+1]{(p+1)\cdot(b+1)!}\right]$, and $d\geq3\cdot\left[\frac{p+1}{n-3}\right]+3$, there is a positive constant $C$ such that:
$$\log(c(\mathbb{K}_{p,1}(b;d))) \geq C \cdotp (p^{\frac{1}{2(b+1)}}).$$
\end{prop}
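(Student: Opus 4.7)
The plan is to combine the $GL_{n-1}$-level inclusion \eqref{godzila} with the mold count provided by the preceding lemma, then convert the resulting lower bound on $GL_{n-1}$-types into a lower bound on $GL_n$-types via the branching rule. The final count of Schur subfunctors of $\BK_{p,1}(b;d)$ is then extracted using \S 0 Fact 5.

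First, I would verify that the choice $n=[\sqrt[b+1]{(p+1)(b+1)!}]$ is calibrated so that $\binom{n+b}{b+1}=h^0(\O_{\P^{n-1}}(b+1))\leq p+1$, which is exactly the hypothesis \eqref{greenn} required to invoke \eqref{godzila} with $q=1$. Expanding the $(p+1)$-st exterior power of a direct sum in the standard way identifies \eqref{big direct sum} as a $GL_{n-1}$-subrepresentation of $K_{p,1}(\C^n,b;d)$. Hence every $\BS_\lambda\C^{n-1}$ produced by the preceding lemma embeds as a $GL_{n-1}$-subrepresentation of $K_{p,1}(\C^n,b;d)$.

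Next, since $K_{p,1}(\C^n,b;d)$ decomposes into $GL_n$-irreducibles, each of which further decomposes into $GL_{n-1}$-irreducibles according to the branching rule \cite[Ex. 6.12]{FH}, for every such $\lambda$ there exists some $\mu$ interlacing $\lambda$ (i.e.\ $\mu_{i+1}\leq\lambda_i\leq\mu_i$) with $\BS_\mu\C^n\subseteq K_{p,1}(\C^n,b;d)$. The key pinning observation is: whenever three consecutive parts $\lambda_i=\lambda_{i+1}=\lambda_{i+2}$ coincide, the interlacing inequalities $\mu_{i+1}\leq\lambda_i$, $\lambda_{i+1}\leq\mu_{i+1}$, $\mu_{i+2}\leq\lambda_{i+1}$, and $\lambda_{i+2}\leq\mu_{i+2}$ force $\lambda_{i+1}=\mu_{i+1}=\mu_{i+2}$; and for the initial pair $\lambda_2=\lambda_3$, a similar manipulation forces $\lambda_2=\lambda_3=\mu_3$. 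Since every entry of the mold $M(\lambda)$ is determined by the values of the pair $(\lambda_2,\lambda_3)$ and of each subsequent triplet, it is a function of $\mu$. Consequently, if two partitions $\lambda,\lambda'$ with almost triplet pattern both arise by branching from the same $\mu$, their molds coincide; equivalently, distinct molds force distinct $\mu$'s.

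Putting everything together, selecting one $\lambda$ per mold yields distinct Schur $GL_n$-types inside $K_{p,1}(\C^n,b;d)$, and by \S 0 Fact 5 these correspond to distinct Schur subfunctors of $\BK_{p,1}(b;d)$. Thus $c(\BK_{p,1}(b;d))$ is bounded below by the number of molds, whose logarithm is at least a constant multiple of $\sqrt{n}$ by the previous lemma. Since $\sqrt{n}=\Theta(p^{1/(2(b+1))})$ for the specified $n$, the desired bound follows. The main obstacle I foresee is the triplet-pinning computation above; the rest is bookkeeping that reuses the restriction apparatus of Section 3 together with the lemma.
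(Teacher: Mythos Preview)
Your proposal is correct and follows essentially the same route as the paper's proof: verify that the chosen $n$ satisfies \eqref{greenn}, feed the lemma's $GL_{n-1}$-representations into $K_{p,1}(\C^n,b;d)$ via \eqref{godzila}, then use branching to promote distinct molds to distinct $GL_n$-types, and finish with $n\in\Theta(p^{1/(b+1)})$. The paper compresses the mold-pinning step into the one-line observation in the preamble of \S4.2 (``One sees that irreducible $GL_{n-1}$-representations corresponding to partitions with different molds cannot branch out from the same irreducible $GL_n$-representation''); your interlacing computation is exactly the verification of that sentence, and it is correct as written.
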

\begin{proof}
The choice of $n$ insures that \eqref{greenn} holds, hence by the discussion in the previous subsection, the $GL_{n-1}$-subrepresentations given by the previous lemma appear in the decomposition of $K_{p,1}(\C^n,b;d)$. We conclude by using the restriction argument of the previous section, the discussion in the preamble of this section, and noticing that $n\in \Theta (p^{1/(b+1)})$.\end{proof}

\begin{rmk}\textnormal{
Fixing $d$ and increasing $p$ alone is a more desirable problem to study. In attempting to use the restriction argument, our limitation in showing nontrivial growth for fixed $d$ stems from not knowing if there exist many Schur subfunctors of $\bigwedge^pS^d$ corresponding to partitions of relatively short length and with distinct twin patterns. (In Remark \ref{wexplicit}, all Young diagrams are much too tall.) 
}
\end{rmk}

\noindent\textsc{Department of Mathematics, Princeton University, Princeton, NJ 08544-1000, USA\\
Department of Mathematics, University of Michigan, Ann Arbor, MI 48109, USA
\\ Institute of Mathematics of the Romanian Academy, P. O. Box 1-764, RO-014700,
Bucharest, Romania}
\vskip.2cm
\textsc{E-mail:} afulger@princeton.edu
\vskip.5cm
\noindent\textsc{Department of Mathematics, University of Michigan, Ann Arbor, MI 48109, USA}
\vskip.2cm
\textsc{E-mail:} paulxz@umich.edu
\end{document}